\definecolor{citegreen}{rgb}{0,0.6,0}
\definecolor{refred}{rgb}{0.8,0,0}
\newtheorem{thm}{Theorem}
\newtheorem{definition}[thm]{Definition}
\newtheorem{lemma}[thm]{Lemma}
\numberwithin{equation}{section} \numberwithin{thm}{section}
\newcommand{\R}{\mathbb{R}}
\newcommand{\N}{\mathbb{N}}
\newcommand{\Z}{\mathbb{Z}}
\renewcommand{\l}{\lambda}
\newcommand{\loc}{\mathrm{loc}}
\newcommand{\p}{\partial}
\begin{document}
\title[Conservation laws]{Conservation laws for even order elliptic systems in the critical dimension - a new approach}

\author[J.~H\"orter]{Jasmin H\"orter}
\address[J.~H\"orter]{Department of Mathematics\\ 
Karlsruhe Institute of Technology \\ 
76128 Karlsruhe\\ Germany}
\email{jasmin.hoerter@kit.edu}

\author[T.~Lamm]{Tobias Lamm}
\address[T.~Lamm]{Department of Mathematics\\ 
Karlsruhe Institute of Technology \\ 
76128 Karlsruhe\\ Germany}
\email{tobias.lamm@kit.edu}

\thanks{The authors acknowledge funding by the Deutsche Forschungsgemeinschaft (DFG, German Research Foundation) – 281869850 (RTG 2229)}

\date{\today}

\begin{abstract} 
We consider elliptic systems of order $2m$ in dimension $2m$ which are generalizations of extrinsic and intrinsic polyharmonic maps. We show the existence of a conservation law for these systems by using a small perturbation of Uhlenbeck's gauge fixing matrix.
\end{abstract}

\maketitle

\section{Introduction}
The regularity of critical points of geometric variational problems for maps between two Riemannian manifolds attracted a lot of attention over the last two decades. The most prominent example are  the harmonic maps which are critical points  $u\in W^{1,2}(M,N)$ of the Dirichlet energy
\[
E(u)=\frac12 \int_M |\nabla u|^2 \, dv_g,
\]
where $(M,g)$ and $(N,h)$ are two smooth and compact manifolds without boundary and $N$ is isometrically embedded into some euclidean space $\R^n$. They solve the elliptic system 
\[
-\Delta u =A(u)(\nabla u, \nabla u) ,
\]
where $A$ is the second fundamental form of the embedding $N\hookrightarrow \R^n$.  The Dirichlet energy is scaling invariant in dimension two, which is called the critical dimension, and it was shown by H\'elein \cite{Helein91} that weakly harmonic maps are smooth in this case.

This result was substantially extended by Rivi\`ere \cite{Riviere07} to more general elliptic systems of the form
\[
-\Delta u= \Omega \cdot \nabla u,
\]
where $\Omega\in L^2(B^2, so(n) \otimes \wedge^1(\R^2))$ and $B^2$ denotes the unit ball in $\R^2$. Rivi\`ere obtained the regularity of weak solutions as a consequence of a conservation law which he derived using the antisymmetry of $\Omega$. The key ingredient here was the use of the Uhlenbeck gauge fixing result \cite{Uhlenbeck82}, see Theorem \ref{Uhlenbeck for harm}. Note that the Euler-Lagrange equation of all quadratic and conformally invariant variational integrals satisfies an equation of the type above. We sketch a version of this result in section \ref{section second order case}.

The regularity result of H\'elein was than extended to the so-called weakly biharmonic maps in $\R^4$, i.e. critical points of the functional
\[
E_2(u)=\frac12 \int_M |\Delta u|^2 \, dv_g
\]
by Chang-Wang-Yang \cite{ChangWangYang} for spherical targets and by Wang \cite{Wang04} for general targets. Later, the second author and Rivi\`ere \cite{LammRiv} were able to show a conservation law for a suitable generalization of the biharmonic map equation in the spirit of the before mentioned paper of Rivi\`ere.  A modified version of this conservation law was later obtained by Struwe \cite{Struwe08}.

De Longueville and Gastel \cite{LongGas} recently extended this result to systems of order $2m$ in the critical dimension. The motivating example behind this system are the $m$-polyharmonic maps $u\in W^{m,2}(B^{2m},N)$, which are critical points of the functional 
\begin{align*}
E_m(u)=\frac12 \int_{B^{2m}} |\nabla^{m}u|^2\, dv_g.
\end{align*}
The Euler-Lagrange equation for $E_m$ was calculated by Angelsberg-Pumberger \cite{AngelsbergPumberger09} resp. Gastel-Scheven \cite{GastelScheven09}. In the latter paper the authors also showed the regularity for these critical points using H\'elein's moving frame technique.

In the following we consider systems of the from 
\begin{align}
\Delta^m u&= \sum_{k=0}^{m-1} \Delta^k \langle V_k, du\rangle + \sum_{k=0}^{m-2} \Delta^k \delta (w_k du).\label{msystem}
\end{align}
with  coefficient functions 
\begin{align*}
w_k&\in W^{2k+2-m,2}(B^{2m}, \R^{n\times n})\qquad\qquad\qquad\text{for }k\in\{0,...,m-2\},
\\
V_k&\in W^{2k+1-m,2}(B^{2m}, \R^{n\times n}\otimes \wedge^1 \R^{2m} )\qquad\text{for }k\in \{0,...,m-1\}, \text{ where}
\\
V_0&= d\eta +F,~~\eta\in W^{2-m,2}(B^{2m}, so(n)),~~ F\in W^{2-m, \frac{2m}{m+1},1}(B^{2m}, \R^{n\times n}\otimes \wedge^1 \R^{2m}).
\end{align*}
It was shown by De Longueville and Gastel \cite{LongGas} that $m$-polyharminic maps are solutions of a system of this type. Note that the definition and the basic properties of the negative Sobolev spaces arising in this equation are collected in section \ref{sec Lorentz}.

In our main Theorem \ref{existence thm} we establish a new conservation law for systems of the form \eqref{msystem}. The novelty here is that we use a small perturbation of the gauge fixing matrix $P$ in a suitable variant of the Uhlenbeck result, see Theorem \ref{Uhlenbeck thm}.

The paper is organized as follows. In section \ref{sec Lorentz} we recall some basic definitions and properties for negative Sobolev and Lorentz-Sobolev spaces and we show a suitable higher order generalization of the Wente Lemma. Moreover, we state and comment on our main Theorem.

In section \ref{section second order case} we review the second order case of our main result, a proof of which was  already sketched by Rivi\`ere in \cite{RivLecture}.

In section \ref{sec Proof} we finally show our main Theorem.

\section{Lorentz-Sobolev spaces and the main result}\label{sec Lorentz}		
In this section we start by recalling the definitions of the relevant function spaces we need in order to obtain the desired conservation law. Moreover, we show a preliminary result on a higher order version of the  famous Wente lemma \cite{Wente69} and we state our main result.

\subsection{Lorentz- and Lorentz-Sobolev spaces}

Important function spaces in our paper are the so called Lorentz spaces. They are interpolation spaces of the classical $L^p$-spaces and in the following we briefly collect a few properties of these spaces. For detailed proofs see for example \cite{LongGas,Grafakos,Helein,Hunt,Tartar,Ziemer}.
We start with a Lemma on the H\"older inequality for these functions 
\begin{lemma}[H\"older inequality]\label{Holder for Lorentz}
	Let $f\in L^{p_1,q_1}(\R^n)$ and $g\in L^{p_2,q_2}(\R^n)$ with $\frac{1}{p_1}+\frac{1}{p_2}=\frac{1}{p}, ~\frac{1}{q_1}+\frac{1}{q_2}=\frac{1}{q}$ and $p_1,p_2\in(1,\infty),~ q_1,q_2\in [1,\infty]$. Then  
	\begin{align*}
	||f g||_{L^{p,q}(\R^n)}\leq ||f||_{L^{p_1,q_1}(\R^n)}||g||_{L^{p_2,q_2}(\R^n)}.
	\end{align*} 	
\end{lemma}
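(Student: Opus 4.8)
The plan is to pass from $f$ and $g$ to their decreasing rearrangements $f^{*},g^{*}$ on $(0,\infty)$ and then combine a submultiplicativity property of the rearrangement with the classical Hölder inequality for the measure $\tfrac{dt}{t}$ on $(0,\infty)$.

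The first ingredient is the elementary rearrangement estimate $(fg)^{*}(s+t)\le f^{*}(s)\,g^{*}(t)$ for all $s,t>0$. This follows from the distribution functions: if $\lambda>f^{*}(s)$ and $\mu>g^{*}(t)$ then $|\{|f|>\lambda\}|\le s$ and $|\{|g|>\mu\}|\le t$, while $\{|fg|>\lambda\mu\}\subseteq\{|f|>\lambda\}\cup\{|g|>\mu\}$, so $|\{|fg|>\lambda\mu\}|\le s+t$ and hence $(fg)^{*}(s+t)\le\lambda\mu$; letting $\lambda\downarrow f^{*}(s)$ and $\mu\downarrow g^{*}(t)$ gives the claim. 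Taking $s=t$ yields the pointwise bound $(fg)^{*}(t)\le f^{*}(t/2)\,g^{*}(t/2)$. For $q<\infty$ I would then estimate
\[
\|fg\|_{L^{p,q}}^{q}=\int_{0}^{\infty}\bigl(t^{1/p}(fg)^{*}(t)\bigr)^{q}\,\frac{dt}{t}\le\int_{0}^{\infty}\bigl(t^{1/p}f^{*}(t/2)\,g^{*}(t/2)\bigr)^{q}\,\frac{dt}{t},
\]
substitute $t=2s$ (this leaves $\tfrac{dt}{t}$ invariant and produces only a power of $2$), and split $s^{1/p}=s^{1/p_{1}}s^{1/p_{2}}$ so that the integrand becomes $\bigl(s^{1/p_{1}}f^{*}(s)\bigr)^{q}\bigl(s^{1/p_{2}}g^{*}(s)\bigr)^{q}$. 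Since $\tfrac1q=\tfrac1{q_{1}}+\tfrac1{q_{2}}$, the exponents $q_{1}/q$ and $q_{2}/q$ are conjugate, so Hölder's inequality for the measure $\tfrac{ds}{s}$ gives
\[
\int_{0}^{\infty}\bigl(s^{1/p_{1}}f^{*}(s)\bigr)^{q}\bigl(s^{1/p_{2}}g^{*}(s)\bigr)^{q}\,\frac{ds}{s}\le\|f\|_{L^{p_{1},q_{1}}}^{q}\,\|g\|_{L^{p_{2},q_{2}}}^{q},
\]
which is the desired inequality. The endpoint case $q=\infty$ (forcing $q_{1}=q_{2}=\infty$) is immediate from the pointwise bound and the same splitting of powers after taking suprema, and the mixed cases where exactly one of $q_{1},q_{2}$ equals $\infty$ are handled identically with one factor placed in $L^{\infty}(\tfrac{ds}{s})$.

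The only genuinely delicate point is the sharpness of the constant: the scheme above naturally produces a factor $2^{1/p}$, and reaching the value $1$ claimed here requires working with the maximal rearrangement $f^{**}(t)=\tfrac1t\int_{0}^{t}f^{*}$ instead of $f^{*}$, which in the range where $L^{p,q}$ carries a genuine norm defines an equivalent and subadditive norm, together with O'Neil's refinement of the rearrangement inequality and Hardy's inequality to pass back to $f^{*}$. This normalisation issue is where essentially all of the bookkeeping lies; since the result is classical, one may also simply invoke \cite{Hunt,Grafakos,Tartar}.
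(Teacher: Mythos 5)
Your argument is the standard O'Neil/Hunt rearrangement proof, and it is correct as far as it goes; note, though, that the paper does not prove this lemma at all — it simply cites \cite{Hunt,Grafakos,Tartar} and the other listed references — so there is no in-paper argument to compare against. The two ingredients you use, namely $(fg)^{*}(s+t)\le f^{*}(s)g^{*}(t)$ (your distribution-function argument for this is right) and the classical H\"older inequality with conjugate exponents $q_1/q$, $q_2/q$ for the measure $ds/s$, are exactly the standard route, and your handling of the cases with $q_i=\infty$ is fine. The one point of substance is the one you already flag: the substitution $t=2s$ leaves you with the constant $2^{1/p}$ rather than the constant $1$ asserted in the statement. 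As written, the lemma with constant exactly $1$ is really a normalization issue (it holds with constant $1$ for suitable renormings of $L^{p,q}$ via $f^{**}$, but not for the naive quasi-norm built from $f^{*}$), and your remark about O'Neil's refinement and Hardy's inequality is the correct diagnosis. For the purposes of this paper the discrepancy is immaterial, since Lemma \ref{Holder for Lorentz} is only ever invoked inside estimates carrying an unspecified constant $c$; but if you want your write-up to match the statement literally you should either insert the constant $2^{1/p}$ (or $c(p)$) into the conclusion or carry out the $f^{**}$ normalization you sketch.
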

Additionally, we also need the following estimates.
\begin{lemma}\label{embedding lorentz exponent}
	Let $f:\R^n\rightarrow \R$ be measurable.
	\begin{itemize}
		\item[1.] 	Let  $1<p\leq \infty$ and $1\leq q<Q\leq \infty$. Then we have 	
		\begin{align*}
		||f||_{L^{p,Q}(\R^n)}\leq c||f||_{L^{p,q}(\R^n)}.
		\end{align*}
		
		\item[2.] Let $1<p<P\leq \infty$, $1\leq q_1,q_2\leq \infty$ and let $\Omega\subset \R^n$ be bounded. Then we have 
		\begin{align*}
		||f||_{L^{p,q_1}(\Omega)}\leq c |\Omega|^{\frac{1}{p}-\frac{1}{P}} ||f||_{L^{P,q_2}(\Omega)}.
		\end{align*}
	\end{itemize}
\end{lemma}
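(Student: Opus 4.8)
The plan is to argue entirely with the nonincreasing rearrangement $f^{*}$ of $f$, using the standard representations
\[
\|f\|_{L^{p,q}(\R^n)}=\Big(\int_0^\infty\big(t^{1/p}f^{*}(t)\big)^q\,\tfrac{dt}{t}\Big)^{1/q}\quad(q<\infty),\qquad \|f\|_{L^{p,\infty}(\R^n)}=\sup_{t>0}t^{1/p}f^{*}(t).
\]
Both assertions are classical, so the task is just bookkeeping with these quasinorms, with no duality or compactness involved. We may ignore the degenerate cases in which one of the spaces is trivial (e.g.\ $L^{\infty,q}=\{0\}$ for $q<\infty$), so in the first assertion we assume $p<\infty$.

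For the first assertion I would start with the endpoint embedding $L^{p,q}\hookrightarrow L^{p,\infty}$: since $f^{*}$ is nonincreasing,
\[
s^{q/p}\big(f^{*}(s)\big)^q=\tfrac{q}{p}\int_0^s t^{q/p-1}\big(f^{*}(s)\big)^q\,dt\le \tfrac{q}{p}\int_0^s t^{q/p-1}\big(f^{*}(t)\big)^q\,dt\le \tfrac{q}{p}\,\|f\|_{L^{p,q}}^q,
\]
so $\|f\|_{L^{p,\infty}}\le(q/p)^{1/q}\|f\|_{L^{p,q}}$; this disposes of the case $Q=\infty$. For $q<Q<\infty$ I would then use the interpolation-type splitting
\[
\|f\|_{L^{p,Q}}^Q=\int_0^\infty\big(t^{1/p}f^{*}(t)\big)^{Q-q}\big(t^{1/p}f^{*}(t)\big)^q\,\tfrac{dt}{t}\le\|f\|_{L^{p,\infty}}^{Q-q}\|f\|_{L^{p,q}}^q
\]
and combine it with the endpoint bound to obtain $\|f\|_{L^{p,Q}}\le c(p,q,Q)\|f\|_{L^{p,q}}$.

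For the second assertion, put $\tfrac1r:=\tfrac1p-\tfrac1P\in(0,1)$, so that $r\in(1,\infty)$. Since $\chi_\Omega^{*}=\chi_{[0,|\Omega|)}$, a one-line computation gives $\|\chi_\Omega\|_{L^{r,s}}=c(r,s)\,|\Omega|^{1/r}$ for every $s\in[1,\infty]$. Assume first $P<\infty$. If $q_1\le q_2$, choose $\bar q\in[1,\infty]$ by $\tfrac1{\bar q}=\tfrac1{q_1}-\tfrac1{q_2}\ge0$ and apply Lemma \ref{Holder for Lorentz} to the product $f\chi_\Omega$ (identifying $L^{\bullet}(\Omega)$ with functions on $\R^n$ vanishing off $\Omega$), with exponents $(P,q_2)$ and $(r,\bar q)$:
\[
\|f\|_{L^{p,q_1}(\Omega)}\le\|f\|_{L^{P,q_2}(\Omega)}\,\|\chi_\Omega\|_{L^{r,\bar q}}=c\,|\Omega|^{1/p-1/P}\,\|f\|_{L^{P,q_2}(\Omega)}.
\]
If $q_1>q_2$, one first applies the first assertion to pass from $L^{p,q_1}$ to $L^{p,q_2}$, reducing to the previous case with $\bar q=\infty$. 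Finally, if $P=\infty$ then necessarily $q_2=\infty$, and one argues by hand: $f^{*}\le\|f\|_{L^\infty(\Omega)}$ and $f^{*}\equiv0$ on $(|\Omega|,\infty)$, so the defining integral is at most $c\,|\Omega|^{1/p}\|f\|_{L^\infty(\Omega)}$, with the obvious modification when $q_1=\infty$.

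The only point that needs care is the matching of the second Lorentz exponents in Lemma \ref{Holder for Lorentz}: its hypothesis $\tfrac1{q_1}+\tfrac1{q_2}=\tfrac1q$ forces $q\le\min(q_1,q_2)$, so one cannot increase the second exponent for free in the H\"older step and must first invoke the first assertion to arrange $q_1\le q_2$. Beyond this (and the $P=\infty$ endpoint handled by hand above), the argument is a routine manipulation of the rearrangement integral.
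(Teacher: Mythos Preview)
The paper does not supply its own proof of this lemma: it is listed among the preliminary properties of Lorentz spaces with a blanket reference to the literature (\cite{LongGas,Grafakos,Helein,Hunt,Tartar,Ziemer}) for detailed proofs. So there is nothing in the paper to compare your argument against.

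Your proof is correct and follows the standard rearrangement approach. The endpoint bound $L^{p,q}\hookrightarrow L^{p,\infty}$ via monotonicity of $f^*$, followed by the splitting $(t^{1/p}f^*)^Q=(t^{1/p}f^*)^{Q-q}(t^{1/p}f^*)^q$, is the textbook route to part~1. For part~2 your use of Lemma~\ref{Holder for Lorentz} with the factor $\chi_\Omega$ is exactly right, and you correctly observe that the second Lorentz exponents must be arranged via part~1 before invoking H\"older, since the H\"older identity $\tfrac1{q_1}+\tfrac1{q_2}=\tfrac1q$ only allows the output exponent to decrease. Your separate handling of the endpoint $P=\infty$ is also needed, since Lemma~\ref{Holder for Lorentz} as stated requires $p_1,p_2\in(1,\infty)$.
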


Next we come to Lorentz-Sobolev spaces. If a function $f\in L^{p,q}(\R^n)$ has derivatives $D^j f\in L^{p,q}(\R^n)$ for all $1\leq j \leq k\in \N$, then  $f$ is an element of the so-called Lorentz-Sobolev space $W^{k,p,q}(\R^n)$. 

\begin{definition}
	Let $1<p\leq \infty,~1\leq q\leq \infty$ and $k\in \N$. Let $f\in L^{p,q}(\R^n)$ be $k$ times weakly differentiable and for all multiindices $\alpha\in \N_0^n$ with $|\alpha|\leq k$ let $\frac{\p^{|\alpha|}}{\p^{\alpha_1}x_1...\p^{\alpha_n}x_n} f\in L^{p,q}(\R^n)$. Then $f$ is an element of the Lorentz-Sobolev space  $ W^{k,p,q}(\R^n)$ with norm 
	\begin{align*}
	||f||_{W^{k,p,q}(\R^n)}:= \sum_{0\leq |\alpha|\leq k}\left\Vert \frac{\p^{|\alpha|}}{\p^{\alpha_1}x_1...\p^{\alpha_n}x_n} f\right\Vert_{L^{p,q}(\R^n)}.
	\end{align*} 
\end{definition}
We have a generalized Sobolev embedding theorem for these spaces.
\begin{lemma}\label{pos. LorentzSobolev Embedding}
	Let $k,n\in \N,~ 1<p<\frac{n}{k}$ and $1\leq q\leq \infty$. Then 
	\begin{align*}
	W^{k,p,q}(B^n)\hookrightarrow L^{p^*, q}(B^{n})
	\end{align*}
	for $\frac{1}{p^*}= \frac{1}{p}+\frac{k}{n}$ with the estimate
	\begin{align*}
	||f||_{L^{p^*,q}(B^n)}\leq c ||f|_{W^{k,p,q}(B^n)}\qquad\text{ for any $f\in W^{k,p,q}(B^n).$}
	\end{align*}
	\end{lemma}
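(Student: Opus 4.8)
The statement is the classical Lorentz--Sobolev embedding, and the plan is to establish the first-order case $k=1$ by a potential estimate combined with the mapping properties of the Riesz potential on Lorentz spaces, and then to deduce the general case by iteration. (Throughout, $p^*$ denotes the Sobolev conjugate exponent $\frac1{p^*}=\frac1p-\frac kn$.)

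\textbf{Step 1: the case $k=1$.} Fix $1<p<n$, $1\le q\le\infty$ and set $\frac1{p_1}=\frac1p-\frac1n$. For $f\in W^{1,p,q}(B^n)$ the classical potential estimate for Sobolev functions gives, for a.e.\ $x\in B^n$,
\[
|f(x)|\ \le\ c\,\|f\|_{L^1(B^n)}\ +\ c_n\int_{B^n}\frac{|\nabla f(y)|}{|x-y|^{n-1}}\,dy\ =\ c\,\|f\|_{L^1(B^n)}+c_n\,I_1\!\big(|\nabla f|\big)(x),
\]
where $I_1$ denotes the Riesz potential of order $1$ and $|\nabla f|$ is extended by zero outside $B^n$. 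Since $|\cdot|^{-(n-1)}\in L^{\frac n{n-1},\infty}(\R^n)$, O'Neil's inequality for convolutions in Lorentz spaces yields
\[
\|I_1 g\|_{L^{p_1,q}(\R^n)}\ \le\ c\,\|g\|_{L^{p,q}(\R^n)},\qquad \frac1{p_1}=\frac{n-1}{n}+\frac1p-1=\frac1p-\frac1n .
\]
Applying this with $g=|\nabla f|$ and using $\|f\|_{L^1(B^n)}\le c\|f\|_{L^{p,q}(B^n)}$ (valid since $p>1$, by Lemma \ref{embedding lorentz exponent} and the distribution function) we obtain $\|f\|_{L^{p_1,q}(B^n)}\le c\,\|f\|_{W^{1,p,q}(B^n)}$, that is $W^{1,p,q}(B^n)\hookrightarrow L^{p_1,q}(B^n)$.

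\textbf{Step 2: iteration.} Let $f\in W^{k,p,q}(B^n)$ with $1<p<\frac nk$. For every multiindex $\alpha$ with $|\alpha|\le k-1$ one has $D^\alpha f\in W^{1,p,q}(B^n)$, because $D^\alpha f$ and all $D^{\alpha+\beta}f$ with $|\beta|=1$ lie in $L^{p,q}(B^n)$; hence Step 1 shows $D^\alpha f\in L^{p_1,q}(B^n)$ with $\frac1{p_1}=\frac1p-\frac1n$, so $W^{k,p,q}(B^n)\hookrightarrow W^{k-1,p_1,q}(B^n)$ together with the corresponding norm bound. Iterating this $k$ times produces exponents $p_0=p,p_1,\dots,p_k$ with $\frac1{p_i}=\frac1p-\frac in$; each application of Step 1 is admissible because $\frac1{p_i}=\frac1p-\frac in>\frac{k-i}{n}\ge\frac1n$ for $0\le i\le k-1$ (so $1<p_i<n$), while $\frac1{p_k}=\frac1p-\frac kn>0$ by hypothesis. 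After $k$ steps we reach $f\in W^{0,p_k,q}(B^n)=L^{p^*,q}(B^n)$ with $\frac1{p^*}=\frac1p-\frac kn$, and composing the estimates from the individual steps gives $\|f\|_{L^{p^*,q}(B^n)}\le c\,\|f\|_{W^{k,p,q}(B^n)}$.

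\textbf{Main difficulty.} Everything apart from Step 1 is bookkeeping. Inside Step 1 the single ingredient that genuinely requires harmonic analysis and cannot be reduced to elementary manipulations is the boundedness $I_1\colon L^{p,q}(\R^n)\to L^{p_1,q}(\R^n)$ --- equivalently O'Neil's convolution inequality, or the weak-type bounds for $I_1$ combined with a Lorentz-space version of the Marcinkiewicz interpolation theorem. This, as well as the classical potential estimate used above, is contained in the references listed at the beginning of this subsection, so in the write-up one may simply invoke them; the scaling relation $\frac1{p_1}=\frac1p-\frac1n$ is precisely the one dictated by the homogeneity of the kernel $|x|^{-(n-1)}$.
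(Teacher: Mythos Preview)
The paper does not supply its own proof of this lemma; it is stated as a known fact, and the reader is referred to the literature (in particular \cite{LongDiss} and the other references listed at the end of the subsection on Lorentz--Sobolev spaces). Your proof via the pointwise Riesz--potential bound on a ball, O'Neil's convolution inequality $I_1\colon L^{p,q}\to L^{p_1,q}$, and iteration from $k=1$ to general $k$ is the standard argument and is correct; the admissibility check $1<p_i<n$ at each step is exactly what is needed. You also rightly observe that the exponent relation should read $\tfrac{1}{p^*}=\tfrac{1}{p}-\tfrac{k}{n}$; the plus sign in the displayed formula is a typo in the paper.
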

Similar to Lemma \ref{Holder for Lorentz} we have a product estimate for Lorentz-Sobolev functions.
\begin{lemma}\label{Prodsatz Sobolev Lorentz}
	Let $ s,k\in \N ,~ p,p',q,q'\in \R$ with $1<p,p',q,q'<\infty,~ kp<n,sp'<n,s\leq k, t:= \frac{npp'}{n(p+p')-kpp'}>1$ and $\frac{1}{u}:= \min\{\frac{1}{q}+\frac{1}{q'}, 1 \}$. Further let $B^n\subset\R^n$. If $f\in W^{k,p,q}(B^n),~ g\in W^{s,p',q'}(B^n)$, then 
	\begin{align*}
	fg&\in W^{s,t,u}(B^n)
	\intertext{ and }
	||fg||_{W^{s,t,u}(B^n)}&\leq c ||f||_{W^{k,p,q}(B^n)}||g||_{W^{s,p'q'}(B^n)}
	\end{align*}
	with $c=c(B^n)$.
\end{lemma}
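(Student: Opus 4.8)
The plan is to reduce the statement to the ordinary (non-Lorentz) Sobolev product rule together with the Lorentz-refined Sobolev embedding of Lemma \ref{pos. LorentzSobolev Embedding} and the H\"older inequality of Lemma \ref{Holder for Lorentz}. First I would treat the top-order term. By the Leibniz rule, for a multiindex $\beta$ with $|\beta|\le s$ the derivative $D^\beta(fg)$ is a finite sum of terms $D^{\beta_1}f\, D^{\beta_2}g$ with $|\beta_1|+|\beta_2|=|\beta|\le s$. It therefore suffices to estimate each such product in $L^{t,u}(B^n)$ and sum. Writing $a=|\beta_1|$ and $b=|\beta_2|$, so $a+b\le s\le k$, the factor $D^{\beta_1}f$ lies in $W^{k-a,p,q}(B^n)$ and the factor $D^{\beta_2}g$ lies in $W^{s-b,p',q'}(B^n)$.

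The core of the argument is to run Lemma \ref{pos. LorentzSobolev Embedding} on each factor to trade the remaining derivatives for integrability, and then apply Lemma \ref{Holder for Lorentz}. Concretely, $D^{\beta_1}f \in L^{p_a,q}(B^n)$ with $\frac{1}{p_a}=\frac1p-\frac{k-a}{n}$ (using $kp<n$ so that $(k-a)p<n$ and the embedding applies; if $\frac1p-\frac{k-a}{n}\le 0$ one first uses part 1 of Lemma \ref{embedding lorentz exponent} and part 2 to land in any $L^{p_a,q}$ with finite $p_a$, on the bounded domain $B^n$), and similarly $D^{\beta_2}g\in L^{p_b',q'}(B^n)$ with $\frac{1}{p_b'}=\frac1{p'}-\frac{s-b}{n}$. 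By Lemma \ref{Holder for Lorentz},
\begin{align*}
\|D^{\beta_1}f\, D^{\beta_2}g\|_{L^{r,u}(B^n)} \le c\,\|D^{\beta_1}f\|_{L^{p_a,q}(B^n)}\|D^{\beta_2}g\|_{L^{p_b',q'}(B^n)} \le c\,\|f\|_{W^{k,p,q}(B^n)}\|g\|_{W^{s,p',q'}(B^n)},
\end{align*}
where $\frac1r=\frac1{p_a}+\frac1{p_b'} = \frac1p+\frac1{p'}-\frac{(k-a)+(s-b)}{n}$ and $\frac1u=\min\{\frac1q+\frac1{q'},1\}$, exactly as in the statement. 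Since $a+b\le s$, one checks $\frac1r \le \frac1p+\frac1{p'}-\frac{k+s-s}{n} \cdot(\dots)$; more precisely the worst case $a+b=s$ gives $\frac1r=\frac1p+\frac1{p'}-\frac{k}{n}=\frac1t$, and any smaller $a+b$ gives a larger exponent $r$, hence by part 2 of Lemma \ref{embedding lorentz exponent} (bounded domain) it embeds into $L^{t,u}(B^n)$ as well. The hypothesis $t>1$ guarantees $\frac1t<1$ so that all Lorentz exponents stay in the admissible range $(1,\infty)$ for Lemma \ref{Holder for Lorentz}, and the condition $q,q'<\infty$ (with $u$ defined by the min) keeps the second index admissible.

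For the lower-order derivatives $D^\gamma(fg)$ with $|\gamma|\le s$ one argues identically, and in fact these are controlled by the $|\gamma|=s$ case together with part 2 of Lemma \ref{embedding lorentz exponent} on the bounded ball, so summing over all $|\beta|\le s$ yields $fg\in W^{s,t,u}(B^n)$ with the claimed estimate and $c=c(B^n)$. The main obstacle is bookkeeping: one must verify in the borderline sub-cases (when $\frac1p-\frac{k-a}{n}$ or $\frac1{p'}-\frac{s-b}{n}$ is non-positive, i.e.\ a factor already embeds into $L^\infty$ or into every $L^{p,q}$) that the chain of embeddings still lands in $L^{t,u}(B^n)$; this is where the boundedness of $B^n$ and part 2 of Lemma \ref{embedding lorentz exponent} are essential, and it is worth isolating these cases explicitly rather than treating them uniformly.
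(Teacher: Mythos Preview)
The paper does not give its own proof of this lemma; it is stated in Section~\ref{sec Lorentz} as background and the reader is referred to \cite{LongDiss} for the details. Your argument --- expand $D^\beta(fg)$ by the Leibniz rule, apply the Lorentz--Sobolev embedding of Lemma~\ref{pos. LorentzSobolev Embedding} to each factor, multiply via Lemma~\ref{Holder for Lorentz}, and then use Lemma~\ref{embedding lorentz exponent} on the bounded ball to land in the common target $L^{t,u}$ --- is the standard route and is correct. One simplification: under the stated hypotheses $kp<n$ and $sp'<n$ (together with $a\le s\le k$ and $b\le s$), the reciprocals $\tfrac1p-\tfrac{k-a}{n}$ and $\tfrac1{p'}-\tfrac{s-b}{n}$ are always strictly positive, so the ``borderline sub-cases'' you worry about at the end never actually occur and the argument goes through uniformly without a separate case analysis.
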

Furthermore, we need an optimal Sobolev embedding result.
\begin{lemma}
	Let $B^n\subset \R^n$. If $f\in W^{k,\frac{n}{k},1}(B^n)$, then $f$ is continuous on $B^n$.
\end{lemma}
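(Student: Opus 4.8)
The statement is: if $f\in W^{k,\frac nk,1}(B^n)$, then $f$ is continuous on $B^n$. The borderline case is that $W^{k,p}$ just fails to embed into $C^0$ when $kp=n$, but strengthening the integrability of the top-order derivatives to the Lorentz space $L^{n/k,1}$ (rather than merely $L^{n/k}$) is exactly enough to recover continuity. The cleanest route is to peel off derivatives one at a time using Lemma \ref{pos. LorentzSobolev Embedding} until one reaches a first-order Lorentz-Sobolev space, and then prove the endpoint embedding $W^{1,n,1}\hookrightarrow C^0$ directly.

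\medskip

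The plan is as follows. First I would reduce to the first-order case. Applying Lemma \ref{pos. LorentzSobolev Embedding} with $p=\frac nk<\frac n{k-j}$ to the function $D^{k-1}f\in W^{1,n/k,1}$, and iterating (equivalently, applying the embedding to $f$ itself with Sobolev exponent shifting from $\frac nk$ to $n$ after $k-1$ steps), one obtains $D^j f\in W^{k-j,\,n/(j+1),\,1}(B^n)$ for $0\le j\le k-1$; in particular $\nabla f\in W^{k-1,n/k,1}\subset\dots\subset L^{n,1}(B^n)$ and more precisely $f\in W^{1,n,1}(B^n)$. So it suffices to show $W^{1,n,1}(B^n)\hookrightarrow C^0(B^n)$.

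\medskip

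Second, I would prove the endpoint embedding $W^{1,n,1}(B^n)\hookrightarrow C^0(B^n)$. After extending $f$ (one may use a bounded extension operator $W^{1,n,1}(B^n)\to W^{1,n,1}(\R^n)$, or work locally with a cutoff) one writes $f$ via the Riesz potential representation $f(x)=c_n\int_{\R^n}\frac{\nabla f(y)\cdot(x-y)}{|x-y|^n}\,dy$, so that $|f(x)|\le c_n\,(I_1|\nabla f|)(x)$ where $I_1$ is the Riesz potential of order $1$, whose kernel $|x-y|^{1-n}$ lies in the weak space $L^{n/(n-1),\infty}$. By the Hölder inequality for Lorentz spaces, Lemma \ref{Holder for Lorentz}, with $\frac{1}{n}+\frac{n-1}{n}=1$ and $\frac11+\frac1\infty = 1$ on the second indices, one gets the pointwise bound $\|I_1|\nabla f|\|_{L^\infty}\le c\,\||x|^{1-n}\|_{L^{n/(n-1),\infty}}\,\|\nabla f\|_{L^{n,1}}<\infty$, giving an $L^\infty$ bound. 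For genuine continuity one then runs the same estimate on the difference $f(x)-f(x')$: splitting the integral over a small ball $B_\rho(x)$ and its complement, the near part is controlled by $\|\nabla f\|_{L^{n,1}(B_{2\rho}(x))}$ which tends to $0$ as $\rho\to0$ by absolute continuity of the Lorentz norm (this is where the index $1$, rather than $\infty$, on the second Lorentz parameter is essential), and the far part depends continuously on $x$. Uniform continuity follows, and since $B^n$ is bounded, $f\in C^0(\overline{B^n})$.

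\medskip

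The main obstacle is the endpoint embedding $W^{1,n,1}\hookrightarrow C^0$ itself: the failure of the classical $W^{1,n}\hookrightarrow C^0$ means one genuinely has to exploit the finer Lorentz structure, and the continuity (not just boundedness) requires the absolute-continuity/equi-integrability of the $L^{n,1}$ norm, which is the technically delicate point. Once that lemma is in hand, the iteration reducing the $k$-th order case to it via Lemma \ref{pos. LorentzSobolev Embedding} is routine bookkeeping with the exponents.
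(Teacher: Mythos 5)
The paper does not actually prove this lemma: it is quoted as a known endpoint embedding (Stein/Tartar type), with the proofs of all the Lorentz--Sobolev preliminaries deferred to de Longueville's thesis \cite{LongDiss}. So there is no in-paper argument to compare against, and your plan has to stand on its own. It does: the reduction via Lemma \ref{pos. LorentzSobolev Embedding} is legitimate (note the hypotheses are satisfied since $\frac nk<\frac n{k-1}$, and the exponent relation in that lemma should of course be read as $\frac1{p^*}=\frac1p-\frac kn$ --- the sign in the paper's statement is a typo), and the endpoint step $W^{1,n,1}\hookrightarrow C^0$ via the Riesz potential $|f(x)|\le c\,(I_1|\nabla f|)(x)$, the kernel bound $|x|^{1-n}\in L^{\frac n{n-1},\infty}$, and the Lorentz H\"older inequality with second indices $1$ and $\infty$ is exactly the standard proof. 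Your continuity argument (splitting near/far and using absolute continuity of the $L^{n,1}$ quasinorm) is correct; an equivalent and slightly slicker finish is to use density of $C^\infty$ in $W^{1,n,1}$ (available because the second index is $1$, hence finite) together with the uniform $L^\infty$ bound, so that $f$ is a uniform limit of continuous functions. The only point you should make explicit in a full write-up is the extension/cutoff step needed before invoking the Riesz potential representation, but for the ball this is routine. Overall the proposal is correct and is the expected argument.
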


Later on we also use Lorentz-Sobolev spaces $W^{k,p,q}$ with a negative exponent $k$. These are distribution spaces and for $p,q>1$ they are the dual spaces of $W^{k,p,q}$.

\begin{definition}
	Let $1<p,q<\infty,~ \frac{1}{p}+\frac{1}{p'}=\frac{1}{q}+\frac{1}{q'}=1$ and $k\in\N$. Then $W^{-k,p,q}(B^n)$ is the space of distributions $\Phi\in (C^\infty_c(B^n))'$ such that 
	\begin{align*}
	|\Phi [f]|\leq c||f||_{W^{k,p',q'}(B^n)}\qquad\forall f\in C^\infty_c(B^n).
	\end{align*}
\end{definition}

Each element of $W^{-k,p,q}$ has a representation in terms of derivatives of Lorentz functions:

\begin{lemma}\label{Lorentz Sobolev distr darstell}
	Let $1<p,q<\infty,~ k\in\N,~ B^n\subset \R^n$ and $f\in W^{-k,p,q}(B^n)$. Then there exist	$f_\alpha\in L^{p,q}(B^n)$ so that
	\begin{align*}
	f=\sum_{|\alpha|\leq k}\p^{\alpha}f_\alpha.
	\end{align*}
	Note that this representation is not unique. We define the norm on $W^{-k,p,q}(B^n)$ by 
	\begin{align*}
	||f||_{W^{-k,p,q}(B^n)} := \inf \left\{ \sum_{|\alpha|\leq k}||f_\alpha||_{L^{p,q}(B^n)} :   f= \sum_{|\alpha|\leq k} \p^{\alpha}f_\alpha  \right\}.
	\end{align*}
\end{lemma}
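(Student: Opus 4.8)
The plan is to run the standard duality argument, representing the distribution $f$ by testing against $C_c^\infty(B^n)$ and using Hahn--Banach together with the known description of the dual of a Lorentz space. Let $p',q'$ denote the conjugate exponents, $\frac1p+\frac1{p'}=\frac1q+\frac1{q'}=1$, and let $N$ be the number of multiindices $\alpha\in\N_0^n$ with $|\alpha|\le k$. First I would introduce the linear map
\[
T:C_c^\infty(B^n)\to \big(L^{p',q'}(B^n)\big)^{N},\qquad T\ph=(\p^\alpha\ph)_{|\alpha|\le k},
\]
where the target carries the norm $\|(h_\alpha)\|=\sum_{|\alpha|\le k}\|h_\alpha\|_{L^{p',q'}(B^n)}$. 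The map $T$ is injective (its $\alpha=0$ component is $\ph$ itself), and by the very definition of $W^{-k,p,q}(B^n)$ the functional $f\circ T^{-1}$ is bounded on the subspace $T(C_c^\infty(B^n))\subset\big(L^{p',q'}(B^n)\big)^{N}$.

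By the Hahn--Banach theorem, $f\circ T^{-1}$ extends to a bounded linear functional $\Lambda$ on all of $\big(L^{p',q'}(B^n)\big)^{N}$. Since $1<p',q'<\infty$, the dual of $L^{p',q'}(B^n)$ is $L^{p,q}(B^n)$ under the standard integral pairing, so there exist $g_\alpha\in L^{p,q}(B^n)$ with $\Lambda\big((h_\alpha)_{|\alpha|\le k}\big)=\sum_{|\alpha|\le k}\int_{B^n} g_\alpha\, h_\alpha$. Setting $f_\alpha:=(-1)^{|\alpha|}g_\alpha\in L^{p,q}(B^n)$, for every $\ph\in C_c^\infty(B^n)$ one then gets
\[
\Big(\sum_{|\alpha|\le k}\p^\alpha f_\alpha\Big)[\ph]
=\sum_{|\alpha|\le k}(-1)^{|\alpha|}\int_{B^n} f_\alpha\,\p^\alpha\ph
=\sum_{|\alpha|\le k}\int_{B^n} g_\alpha\,\p^\alpha\ph
=\Lambda(T\ph)=f[\ph],
\]
which is exactly the claimed representation $f=\sum_{|\alpha|\le k}\p^\alpha f_\alpha$ in $(C^\infty_c(B^n))'$.

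Non-uniqueness of the representation is then transparent, since $T$ is far from surjective and the Hahn--Banach extension is not unique; combining this construction with the H\"older inequality of Lemma \ref{Holder for Lorentz} (pairing $L^{p,q}$ with $L^{p',q'}$) one also checks that the norm defined in the statement is equivalent to the operator norm of $f$ on the closure of $C^\infty_c(B^n)$ in $W^{k,p',q'}(B^n)$. The only genuinely non-trivial input here is the identification $(L^{p',q'})'=L^{p,q}$ for $1<p',q'<\infty$ under the integral pairing; this is where I expect whatever real work there is to lie, although it is a classical fact available in the references collected at the beginning of the section. Everything else is soft functional analysis.
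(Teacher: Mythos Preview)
The paper does not actually prove this lemma; it is stated without proof in Section~\ref{sec Lorentz} and the reader is referred to \cite{LongDiss} (see the sentence ``More details about these spaces and proofs of the above results can be found in \cite{LongDiss}'' after Lemma~\ref{Lorentz Sobolev prod}). So there is no in-paper argument to compare against.

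Your argument is the standard one and is correct. It is exactly the proof one finds for the classical case $W^{-k,p}=(W_0^{k,p'})'$ (as in Adams or Br\'ezis), with the sole modification that the Riesz-type representation of $(L^{p',q'})'$ by $L^{p,q}$ replaces the usual $L^p$ duality; as you note, this identification for $1<p',q'<\infty$ is classical and is available in the references \cite{Hunt,Grafakos} listed in the section. One small point worth making explicit: the rearrangement expression $\|\cdot\|_{L^{p',q'}}$ is in general only a quasi-norm, so to invoke Hahn--Banach cleanly you should work with the equivalent genuine norm on $L^{p',q'}$ (via $f^{**}$), which exists precisely because $1<p'<\infty$. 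With that understood, every step goes through, and your remark on non-uniqueness and on the equivalence of the infimum norm with the operator norm is also correct.
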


The definition of negative Lorentz-Sobolev spaces as dual spaces does not hold for $p, q=1$ since $L^{p,1}, L^{p',\infty}$ are not reflexive. In this case we define the space $W^{-k,p,1}$ as follows
\begin{definition}
	Let $1<p<\infty$, $k\in \N$. Then 	
	\begin{align*}
	W^{-k,p,1}(B^n):=\left \{ f= \sum_{|\alpha|\leq k} \p^{\alpha}f_\alpha : f_\alpha\in L^{p,1}(B^n)  \right\}
	\end{align*}
	with norm 
	\begin{align*}
	||f||_{W^{-k,p,1}(B^n)} := \inf \left\{ \sum_{|\alpha|\leq k}||f_\alpha||_{L^{p,1}(B^n)} :   f= \sum_{|\alpha|\leq k} \p^{\alpha}f_\alpha  \right\}
	\end{align*}
\end{definition}

Finally we have an embedding theorem and a H\"older inequality.

\begin{lemma}\label{Lorentz Sobolev embedding}
	Let $B^n\subset \R^n$, $1<p<d, ~1\leq q\leq p,~ l,s,t\in \N_0$ with $tp<n$ and $f\in W^{-s,p,q}(B^n, \wedge^l\R^n)$. Then $f\in W^{-(s+t),\frac{np}{n-tp},q}(B^n, \wedge^l\R^n)$ and 
	\begin{align*}
	||f||_{W^{-(s+t),\frac{np}{n-tp},q}(B^n)}\leq c ||f||_{W^{-s,p,q}(B^n)}.
	\end{align*}
\end{lemma}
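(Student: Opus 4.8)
The plan is to peel off derivatives and reduce the whole statement to a single ``elementary'' step, namely the embedding
\[
L^{p,q}(B^n)\hookrightarrow W^{-1,\frac{np}{n-p},q}(B^n)\qquad\text{for }1<p<n,\ 1\le q\le\infty,
\]
and then to iterate it $t$ times. Since the Lorentz--Sobolev spaces and all the claimed inclusions are defined componentwise, the $\wedge^l\R^n$-valued statement follows at once from the scalar one, so I treat scalar $f$ only.

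For the reduction, let $f\in W^{-s,p,q}(B^n)$ and use Lemma \ref{Lorentz Sobolev distr darstell} (respectively the definition of $W^{-s,p,1}$) to write $f=\sum_{|\beta|\le s}\partial^\beta f_\beta$ with $f_\beta\in L^{p,q}(B^n)$ and $\sum_{|\beta|\le s}\|f_\beta\|_{L^{p,q}(B^n)}\le 2\|f\|_{W^{-s,p,q}(B^n)}$. Applying the elementary embedding $t$ times to each $f_\beta$ produces a representation $f_\beta=\sum_{|\alpha|\le t}\partial^\alpha g_{\beta\alpha}$ with $g_{\beta\alpha}\in L^{\frac{np}{n-tp},q}(B^n)$ and $\sum_{|\alpha|\le t}\|g_{\beta\alpha}\|_{L^{\frac{np}{n-tp},q}(B^n)}\le c\,\|f_\beta\|_{L^{p,q}(B^n)}$; this is admissible because at the $k$-th step the integrability exponent $p_k$ satisfies $\frac1{p_k}=\frac1p-\frac kn$, and the requirement $1<p_k<n$ for all $k=0,\dots,t-1$ is precisely the hypothesis $tp<n$. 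Collecting terms, $f=\sum_{|\beta|\le s,\,|\alpha|\le t}\partial^{\beta+\alpha}g_{\beta\alpha}$ with $|\beta+\alpha|\le s+t$; this exhibits $f$ as an element of $W^{-(s+t),\frac{np}{n-tp},q}(B^n)$ — by definition when $q=1$, and, when $q>1$, because the pairing estimate $|f[\varphi]|\le\big(\sum_{\beta,\alpha}\|g_{\beta\alpha}\|\big)\|\varphi\|_{W^{s+t,(\frac{np}{n-tp})',q'}(B^n)}$, which follows from the H\"older inequality of Lemma \ref{Holder for Lorentz}, identifies $f$ with an element of the dual of $W^{s+t,(\frac{np}{n-tp})',q'}(B^n)$. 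Summing the norm bounds gives $\|f\|_{W^{-(s+t),\frac{np}{n-tp},q}(B^n)}\le c\,\|f\|_{W^{-s,p,q}(B^n)}$.

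It then remains to prove the elementary embedding. I would extend $f$ by zero to $\bar f\in L^{p,q}(\R^n)$ — the Lorentz norm is unchanged, since the distribution function is — and set $u:=\Gamma*\bar f$, where $\Gamma$ is the fundamental solution of $\Delta$ on $\R^n$. Then $\Delta u=\bar f$ in $\mathcal D'(\R^n)$ and $|\nabla u|\le c\,(I_1|\bar f|)$ pointwise, where $I_1$ is the Riesz potential of order $1$ (kernel a multiple of $|x|^{1-n}$). By the Lorentz-space Hardy--Littlewood--Sobolev inequality (O'Neil's theorem; see e.g.\ \cite{Hunt,Tartar}), $I_1$ maps $L^{p,q}(\R^n)$ boundedly into $L^{\frac{np}{n-p},q}(\R^n)$ for all $1<p<n$ and $1\le q\le\infty$, so $\nabla u\in L^{\frac{np}{n-p},q}(\R^n)$ with $\|\nabla u\|_{L^{\frac{np}{n-p},q}(\R^n)}\le c\,\|f\|_{L^{p,q}(B^n)}$. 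Restricting to $B^n$, which does not increase the Lorentz norm, and writing $f=(\Delta u)|_{B^n}=\sum_{j=1}^n\partial_j\big((\partial_j u)|_{B^n}\big)$ gives $f\in W^{-1,\frac{np}{n-p},q}(B^n)$ with the required estimate. When $q>1$ one may, alternatively, bypass O'Neil's theorem and prove the whole lemma in one stroke by duality: since $W^{-(s+t),\frac{np}{n-tp},q}(B^n)$ is the dual of $W^{s+t,(\frac{np}{n-tp})',q'}(B^n)$, it suffices to combine the pairing $|f[\varphi]|\le\|f\|_{W^{-s,p,q}}\|\varphi\|_{W^{s,p',q'}}$ with the estimate $\|\varphi\|_{W^{s,p',q'}}\le c\,\|\varphi\|_{W^{s+t,(\frac{np}{n-tp})',q'}}$, the latter being exactly the positive Lorentz--Sobolev embedding of Lemma \ref{pos. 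LorentzSobolev Embedding} applied to the exponent $(\frac{np}{n-tp})'$ and order $t$ — and one checks that the required $1<(\frac{np}{n-tp})'<\frac nt$ is, once again, equivalent to $tp<n$.

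The main obstacle is the endpoint second exponent $q=1$. There $W^{-k,p,1}$ is not a dual space, $L^{p,1}$ is the smallest Lorentz space at its integrability level, and so is the target $L^{\frac{np}{n-p},1}$; hence no soft functional-analytic argument can produce the required gain of integrability, and one genuinely needs the boundedness of the Riesz potential on Lorentz spaces up to $q=1$ — the one ingredient not already contained in the preceding lemmas. Everything else (harmlessness of zero-extension and restriction for Lorentz norms, admissibility of the intermediate exponents, and the multi-index bookkeeping) is routine.
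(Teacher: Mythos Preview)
The paper does not prove this lemma itself; it is one of the preparatory results in Section~\ref{sec Lorentz} whose proofs are deferred wholesale to~\cite{LongDiss}. So there is no in-paper argument to compare against.

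Your argument is correct and follows the natural route. For $q>1$ the statement is exactly the dual of the positive-order embedding in Lemma~\ref{pos. LorentzSobolev Embedding}, as you observe at the end; for the endpoint $q=1$, where no duality is available, you supply the one genuinely new ingredient, the Lorentz-space boundedness of the Riesz potential (O'Neil), and combine it with the representation from Lemma~\ref{Lorentz Sobolev distr darstell} and a $t$-fold iteration. The bookkeeping is clean: the intermediate exponents $p_k=\tfrac{np}{n-kp}$ satisfy $1<p_k<n$ for $k=0,\dots,t-1$ precisely when $tp<n$, zero-extension and restriction do not increase Lorentz norms, and the identity $(\Delta u)|_{B^n}=\sum_j\partial_j\big((\partial_j u)|_{B^n}\big)$ holds in $\mathcal D'(B^n)$ because test functions on $B^n$ extend by zero to $\R^n$. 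One remark: your proof never uses the hypothesis $q\le p$, so it in fact establishes the lemma for the full range $1\le q<\infty$; that restriction in the statement appears to be inherited from the cited source rather than intrinsic.
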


\begin{lemma}\label{Lorentz Sobolev prod}
	Let $s,t\in\N,~t\leq s,~ 1<p,p'<\infty$ with $\frac{1}{p}+\frac{1}{p'}\leq 1$ and $tp<n,~ sp'<n,~1\leq q,q'<\infty.$ Let  $f\in W^{-t,p,q}(B^n)$ and $g\in W^{s,p',q'}(B^n)$. Then 
	\begin{align*}
	fg\in W^{-t,x,y}(B^n)
	\end{align*}
	with $x=\frac{dpp'}{n(p+p')-spp'}$ and $\frac{1}{y}=\min\{1,\frac{1}{q},\frac{1}{q'}\}$. Further 
	\begin{align*}
	||fg||_{W^{-t,x,y}(B^n)}\leq c||f||_{W^{-t,p,q}(B^n)} ||g||_{W^{s,p',q'}(B^n)}.
	\end{align*} 
\end{lemma}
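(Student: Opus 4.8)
The plan is to argue by duality and reduce everything to the positive-order product estimate of Lemma~\ref{Prodsatz Sobolev Lorentz}. Under the stated hypotheses one has $\tfrac1x=\tfrac1p+\tfrac1{p'}-\tfrac sn\in(0,1)$, so $x>1$, and (as long as $y>1$, i.e.\ not $q=q'=1$) $W^{-t,x,y}(B^n)$ is characterised as the space of distributions $\Phi$ with $|\Phi[\varphi]|\le c\,\|\varphi\|_{W^{t,x',y'}(B^n)}$ for all $\varphi\in C^\infty_c(B^n)$, where $\tfrac1x+\tfrac1{x'}=\tfrac1y+\tfrac1{y'}=1$. Hence it suffices to bound $|\langle fg,\varphi\rangle|=|\langle f,g\varphi\rangle|$ for such $\varphi$. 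Since $f\in W^{-t,p,q}(B^n)$ acts on $W^{t,p'',q''}(B^n)$ with $\tfrac1p+\tfrac1{p''}=\tfrac1q+\tfrac1{q''}=1$ (for $q=1$ using instead the representation of Lemma~\ref{Lorentz Sobolev distr darstell} together with $L^{p,1}$--$L^{p'',\infty}$ duality), the whole problem reduces to the estimate
\[
\|g\varphi\|_{W^{t,p'',q''}(B^n)}\le c\,\|g\|_{W^{s,p',q'}(B^n)}\,\|\varphi\|_{W^{t,x',y'}(B^n)}.
\]

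To prove this I would apply Lemma~\ref{Prodsatz Sobolev Lorentz} with $g\in W^{s,p',q'}(B^n)$ as the higher-order factor (which is legitimate since $t\le s$) and $\varphi\in W^{t,x',y'}(B^n)$ as the lower-order one. The structural hypotheses hold: $sp'<n$ is assumed, and $tx'<n$ is equivalent to $\tfrac{t-s}{n}<1-\tfrac1p-\tfrac1{p'}$, which follows from $t\le s$ and $\tfrac1p+\tfrac1{p'}\le1$. The integrability exponent produced by the lemma then equals exactly $p''$, since $\tfrac1{x'}+\tfrac1{p'}-\tfrac sn=1-\tfrac1p=\tfrac1{p''}$, and the Lorentz exponent it produces is $1$, since $\tfrac1{q'}+\tfrac1{y'}=\tfrac1{q'}+\max\{1-\tfrac1q,\,1-\tfrac1{q'}\}\ge1$. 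Thus $g\varphi\in W^{t,p'',1}(B^n)\hookrightarrow W^{t,p'',q''}(B^n)$ by item~1 of Lemma~\ref{embedding lorentz exponent}, with the required bound; combining with the pairing estimate and taking the supremum over $\varphi$ with $\|\varphi\|_{W^{t,x',y'}(B^n)}\le1$ gives $fg\in W^{-t,x,y}(B^n)$ and the asserted inequality.

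The duality argument as written does not reach the non-reflexive endpoints (notably $q=q'=1$) or the borderline case $t=s$ with $\tfrac1p+\tfrac1{p'}=1$, where $tx'<n$ degenerates to an equality; for these I would fall back on the representation $f=\sum_{|\alpha|\le t}\partial^\alpha f_\alpha$ with $f_\alpha\in L^{p,q}(B^n)$, expand by the Leibniz identity
\[
(\partial^\alpha f_\alpha)\,g=\sum_{\beta\le\alpha}(-1)^{|\beta|}\tbinom{\alpha}{\beta}\,\partial^{\alpha-\beta}\!\big(f_\alpha\,\partial^\beta g\big),
\]
estimate each block $f_\alpha\,\partial^\beta g$ by first placing $\partial^\beta g\in W^{s-|\beta|,p',q'}(B^n)$ into $L^{r_\beta,q'}(B^n)$, $\tfrac1{r_\beta}=\tfrac1{p'}-\tfrac{s-|\beta|}{n}$, via Lemma~\ref{pos. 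LorentzSobolev Embedding} and then applying the Hölder inequality of Lemma~\ref{Holder for Lorentz}, and finally absorb the $|\alpha|-|\beta|\le t$ outer derivatives, together with the $t-|\alpha|+|\beta|\ge0$ spare orders, into $W^{-t,x,y}(B^n)$ via Lemmas~\ref{Lorentz Sobolev embedding} and~\ref{embedding lorentz exponent}, with the critical embedding $W^{k,n/k,1}(B^n)\hookrightarrow C^0(B^n)$ handling the borderline integrability. I expect the only real difficulty to be bookkeeping: keeping track of which Lorentz--Sobolev space each intermediate product lies in, and checking that $\tfrac1p+\tfrac1{p'}\le1$, $tp<n$ and $sp'<n$ are exactly the inequalities that keep every intermediate exponent in the range where the cited embeddings apply. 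No new analytic estimate is needed beyond Lemma~\ref{Prodsatz Sobolev Lorentz}, the Hölder inequality for Lorentz spaces, the positive and negative Lorentz--Sobolev embeddings, and the elementary Leibniz formula above.
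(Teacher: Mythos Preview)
The paper does not actually prove this lemma: it is stated among the preliminaries in Section~\ref{sec Lorentz} and immediately afterwards the proofs of all the Lorentz--Sobolev facts are deferred with the sentence ``More details about these spaces and proofs of the above results can be found in~\cite{LongDiss}.'' So there is no argument in the paper to compare your proposal against.

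Your proposal itself is sound. The duality reduction to the positive-order product estimate of Lemma~\ref{Prodsatz Sobolev Lorentz} is correct, and your exponent bookkeeping checks out: $\tfrac{1}{x'}+\tfrac{1}{p'}-\tfrac{s}{n}=1-\tfrac{1}{p}=\tfrac{1}{p''}$, $\tfrac{1}{q'}+\tfrac{1}{y'}\ge 1$, and $tx'<n$ follows from $t\le s$ together with $\tfrac{1}{p}+\tfrac{1}{p'}\le 1$ (strictly, away from the borderline you already flag). Your fallback via the representation $f=\sum_{|\alpha|\le t}\partial^\alpha f_\alpha$ and the distributional Leibniz identity is in fact the more natural route given how the paper sets up these spaces through Lemma~\ref{Lorentz Sobolev distr darstell}, and it is presumably close to what is done in~\cite{LongDiss}; it also handles the endpoint $q=1$ and the borderline $t=s$, $\tfrac{1}{p}+\tfrac{1}{p'}=1$ uniformly without a separate argument. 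Either route works; the representation argument is more self-contained in this framework, while your duality argument is slicker in the generic range.
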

More details about these spaces and proofs of the above results can be found in \cite{LongDiss}.

\subsection{A generalized Wente lemma}

A key ingredient in the proof of the main Theorem later on will be the following Wente-type lemma in the spirit of Bethuel and Ghidaglia \cite{BethuelGhidaglia93}. A fourth order version of this result can already be found in \cite{LammRiv}.

\begin{lemma}\label{Wente type lemma}
	Let $\sigma>0$, $f\in L^{\frac{2m}{2m-1-|\gamma|},1}(B^{2m}, \R^{n})$ for $|\gamma|\leq m-2$ and $P\in W^{m,2} (B^{2m}, SO(n))$ with $||d P||_{W^{m-1,2}}\leq \sigma$.
	There exists $\sigma_0>0$ such that if $\sigma<\sigma_0$ there exists a unique solution  $u\in W^{2m-1,\frac{2m}{2m-1-|\gamma|},1}(B^{2m}, M(n))$ of
	\begin{align}\label{boundary cond lemma}\begin{cases}
	\Delta (\Delta^{m-1} u\cdot P)&= \delta f \qquad\quad\text{in }B^{2m},
	\\
	\qquad\quad\Delta^{j} u&=0\qquad\quad\text{on }\p B^{2m}~~\text{ for }j=0,...,m-1,
	\end{cases}
	\end{align}
	with
	\begin{align*}
	||D^{2m-1} u||_{L^{\frac{2m}{2m-1-|\gamma|},1}(B^{2m})}+ ||u||_{L^\infty(B^{2m})} &\leq c ||f||_{L^{\frac{2m}{2m-1-|\gamma|},1}(B^{2m})}.
	\end{align*}
\end{lemma}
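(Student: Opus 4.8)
The plan is to solve \eqref{boundary cond lemma} directly, in two stages, exploiting that $P$ is $SO(n)$-valued; in particular the argument will use only that $P$ is bounded together with the subcritical information $dP\in W^{m-1,2}$, never a product estimate applied to $P$ itself. Throughout put $p:=\frac{2m}{2m-1-|\gamma|}$; since $0\le|\gamma|\le m-2$ one has $1<p\le\frac{2m}{m+1}<2m$, and $(2m-1)p\ge 2m$ with equality precisely when $|\gamma|=0$.

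\emph{Stage 1: reduction via the substitution $w:=\Delta^{m-1}u\cdot P$.} First I would observe that $w$ must solve the Dirichlet problem $\Delta w=\delta f$ in $B^{2m}$ with $w=0$ on $\partial B^{2m}$, the boundary condition holding because $\Delta^{m-1}u=0$ on $\partial B^{2m}$ and $P$ is bounded with a trace on $\partial B^{2m}$. By the Lorentz--Sobolev theory of the Dirichlet Laplacian on the ball (boundedness of Calder\'on--Zygmund operators on $L^{p,q}$ for $1<p<\infty$, $1\le q\le\infty$), this problem has a unique solution $w\in W^{1,p,1}(B^{2m},M(n))$ with $\|w\|_{W^{1,p,1}}\le c\|\delta f\|_{W^{-1,p,1}}\le c\|f\|_{L^{p,1}}$, and by Lemma \ref{pos. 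LorentzSobolev Embedding} one has $w\in L^{p^*,1}(B^{2m})$ with $p^*=\frac{2m}{2m-2-|\gamma|}\in(1,\infty)$. Since $P\in SO(n)$ makes $\Delta^{m-1}u\cdot P=w$ equivalent to $\Delta^{m-1}u=wP^{T}$, a map $u$ solves \eqref{boundary cond lemma} if and only if $w$ is this Dirichlet solution and $u$ solves the Navier problem $\Delta^{m-1}u=wP^{T}$ in $B^{2m}$ with $\Delta^{j}u=0$ on $\partial B^{2m}$ for $j=0,\dots,m-2$.

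\emph{Stage 2: solving the Navier problem.} Next I would verify that $wP^{T}\in W^{1,p,1}(B^{2m})$, with vanishing trace on $\partial B^{2m}$ and with $\|wP^{T}\|_{W^{1,p,1}}\le c\|f\|_{L^{p,1}}$. The trace vanishes because $w=0$ on $\partial B^{2m}$ and $P^{T}$ is bounded with a trace. For the norm bound, $d(wP^{T})=dw\cdot P^{T}+w\cdot dP^{T}$: the first summand is in $L^{p,1}$ since $P^{T}$ is bounded; for the second, $dP\in W^{m-1,2}(B^{2m})\hookrightarrow L^{2m,2}(B^{2m})$ by Lemma \ref{pos. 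LorentzSobolev Embedding}, and $w\in L^{p^*,1}\subset L^{p^*,2}$ by Lemma \ref{embedding lorentz exponent}, so the Lorentz--H\"older inequality (Lemma \ref{Holder for Lorentz}), using $\frac1{p^*}+\frac1{2m}=\frac1p$ and $\frac12+\frac12=1$, gives $w\cdot dP^{T}\in L^{p,1}$ with norm $\le c\|w\|_{W^{1,p,1}}\|dP\|_{W^{m-1,2}}$; the function $wP^{T}$ itself lies in $L^{p,1}$ for the same reasons. Solving the Navier problem by iterating the Dirichlet Laplacian $m-1$ times, each step gaining two Lorentz--Sobolev derivatives, then yields a solution $u\in W^{2m-1,p,1}(B^{2m},M(n))$ with $\|u\|_{W^{2m-1,p,1}}\le c\|wP^{T}\|_{W^{1,p,1}}\le c\|f\|_{L^{p,1}}$. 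Uniqueness of $u$ within $W^{2m-1,p,1}$ follows because any solution $u$ forces $\Delta^{m-1}u\cdot P$ (which lies in $W^{1,p,1}$ by the same product estimate) to coincide with the Dirichlet solution $w$ of Stage 1, hence $\Delta^{m-1}u=wP^{T}$, which then determines $u$ uniquely by uniqueness of the Navier problem. Finally, since $(2m-1)p\ge 2m$, the endpoint embedding $W^{2m-1,\frac{2m}{2m-1},1}(B^{2m})\hookrightarrow C^{0}$ (when $|\gamma|=0$), respectively iterated Sobolev embedding (when $|\gamma|\ge1$), gives $\|u\|_{L^\infty}\le c\|u\|_{W^{2m-1,p,1}}\le c\|f\|_{L^{p,1}}$, which is the asserted estimate.

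\emph{The main obstacle} is the borderline regularity $P\in W^{m,2}(B^{2m})$, for which $mp=2m=n$ and the product Lemmas \ref{Prodsatz Sobolev Lorentz} and \ref{Lorentz Sobolev prod} do not apply to $P$; the two-stage reduction avoids this by only ever multiplying a fixed Lorentz--Sobolev function by $P$ or $P^{T}$ (using their pointwise boundedness, since $SO(n)$ is compact) and multiplying Lorentz functions by $dP$ (using the subcritical embedding $W^{m-1,2}\hookrightarrow L^{2m,2}$). I should note that this argument appears to use only a bound on $\|dP\|_{W^{m-1,2}}$, not its smallness; the hypothesis $\sigma<\sigma_0$ is kept because this is the normalization produced by, and convenient in, the application of the lemma. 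The one genuinely delicate ingredient, were one to make the proof fully self-contained, is the Lorentz--Sobolev elliptic regularity for the Dirichlet Laplacian on the ball at these low exponents, on which both stages rest.
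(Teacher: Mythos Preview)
Your argument is correct and takes a genuinely different route from the paper. The paper proceeds by an a~priori estimate plus absorption: from the Dirichlet bound $\|D(\Delta^{m-1}u\cdot P)\|_{L^{p,1}}\le c\|f\|_{L^{p,1}}$ it expands the product rule and controls the cross term $\Delta^{m-1}u\cdot dP$ by $c\|u\|_{W^{2m-1,p,1}}\|dP\|_{W^{m-1,2}}$, arriving at
\[
\|D^{2m-1}u\|_{L^{p,1}}\le c\|f\|_{L^{p,1}}+c\sigma\|u\|_{W^{2m-1,p,1}},
\]
and the smallness $\sigma<\sigma_0$ is then genuinely used to absorb the last term. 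Your two-stage substitution $w=\Delta^{m-1}u\cdot P$ decouples the problem into the Dirichlet problem for $w$ and an iterated Navier problem for $u$, and the key observation is that the same cross term $w\cdot dP^{T}$ is bounded directly by $c\sigma\|w\|_{W^{1,p,1}}\le c\sigma\|f\|_{L^{p,1}}$ rather than by $c\sigma\|u\|_{W^{2m-1,p,1}}$---so no absorption is needed, and you correctly note that only a bound on $\sigma$ (not smallness) is required, the resulting constant being of order $1+\sigma$. Your approach also makes the existence construction completely explicit, whereas the paper's is phrased as an a~priori estimate followed by approximation with smooth data. Both proofs rest on the same analytic core, Lorentz--Sobolev regularity for the Dirichlet Laplacian on the ball, which you rightly identify as the one nontrivial external input.
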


\begin{proof}
	
	The boundary conditions determine a solution $u$ of (\ref{boundary cond lemma}) uniquely. To see this we assume there exist solutions $u_1,~u_2$ and we let $v:= u_1-u_2$. Then $\Delta (\Delta^{m-1} v\cdot P)=0$. Testing this equation with $\Delta^{m-1} v\cdot P$ and integrating by parts gives
	\begin{align*}
	0&=\int_{B^{2m}} \Delta(\Delta^{m-1} v\cdot P) (\Delta^{m-1}v\cdot P) 
	= - \int_{B^{2m}} |D(\Delta^{m-1}v \cdot P)|^2.
	\end{align*}
	Thus we have $D(\Delta^{m-1}v \cdot P)=0$ and therefore $\Delta^{m-1}v\cdot P =const$. Because $P$ is invertible and $\Delta^{m-1}v=0$ on $\p B^{2m}$ we get $\Delta^{m-1}v=0$. Iteratively we get $v=0$ and thus $u_1=u_2$. 	
	\\
	Now we approximate $f$ by $\bar{f}\in C^{\infty}_c(\R^{2m})$ so that $\bar{f}=0$ on $\R^{2m}\setminus B^{2m}$ and  
	\begin{align*}
	||\bar{f}||_{L^{\frac{2m}{2m-1-|\gamma|},1}(\R^{2m})} 
	\leq c||f||_{L^{\frac{2m}{2m-1-|\gamma|},1}(B^{2m})}.
	\end{align*} 
	Standard $L^p$-theory and interpolation results (see \cite{Helein} Theorem 3.3.3) yield
	\begin{align*}
	|| D(\Delta^{m-1} u P)||_{L^{\frac{2m}{2m-1-|\gamma|},1}(B^{2m})}&\leq c||f||_{L^{\frac{2m}{2m-1-|\gamma|},1}(B^{2m})}.
	\end{align*}	
	With H\"older's inequality for Lorentz spaces and the embedding theorem we estimate
	\begin{align*}
	&||D \Delta^{m-1} u||_{L^{\frac{2m}{2m-1-|\gamma|},1}(B^{2m})}
	\\
	&\leq c \bigg(||f||_{L^{\frac{2m}{2m-1-|\gamma|},1}(B^{2m})} + || D^{2m-2} u||_{L^{\frac{2m}{2m-2-|\gamma|},2}(B^{2m})}||d P||_{L^{2m,2}(B^{2m})} \bigg)
	\\
	&\leq c \left(||f||_{L^{\frac{2m}{2m-1-|\gamma|},1}(B^{2m})}  + || u||_{W^{2m-1,\frac{2m}{2m-1-|\gamma|},1}(B^{2m})}||d P||_{W^{m-1,2}(B^{2m})} \right).
	\end{align*}
	We interchange derivatives and apply the Calderon-Zygmund inequality 
	\begin{align*}
	&||D^{2m-1}u||_{L^{\frac{2m}{2m-1-|\gamma|},1}(B^{2m})}
	\\
	&\leq c \left(||f||_{L^{\frac{2m}{2m-1-|\gamma|},1}(B^{2m})} + || u||_{W^{2m-1,\frac{2m}{2m-1-|\gamma|},1}(B^{2m})}||d P||_{W^{m-1,2}(B^{2m})} \right).
	\end{align*}
	Since  $||d P||_{W^{m-1,2}(B^{2m})}<\sigma$ we absorb the second term to the left-hand side. The density of $C^\infty_c(B^{2m})$  in $L^{p,q}(B^{2m})$ finishes the proof.	
	
\end{proof}

\subsection{The main result}
Before we are able to state our main result we introduce some more notation. Let $\wedge^k\R^{2m},~k\in \N_0$ be the space of $k$-forms on $\R^{2m}$. Further let 
\begin{align*}
d:W^{1,p}(\R^{2m} , \wedge^k\R^{2m} ) \rightarrow L^p(\R^{2m}, \wedge^{k+1}\R^{2m})
\end{align*}
be the exterior derivative and 
\begin{align*}
\delta : W^{1,p}(\R^{2m} , \wedge^k\R^{2m} ) \rightarrow L^p(\R^{2m},  \wedge^{k-1}\R^{2m})
\end{align*}
the codifferential. We have $dd=\delta\delta =0$ and the Laplacian is given by 
\begin{align*}
\Delta =d\delta+\delta d.
\end{align*}
If $f$ is a function, the exterior derivative of $f$ is just the gradient $\nabla f$. 
Let $0\leq k\leq 2m$ with $k\in\N$, then we let
\begin{align*}
*: \wedge^k\R^{2m}\rightarrow \wedge^{2m-k}\R^{2m}
\end{align*}
be the Hodge-Star operator. For a $k$-form $\omega$ we have 
\begin{align}\label{delta def}
\delta \omega= (-1)^{2m(k+1)+1}*d*\omega
\end{align}
and
\begin{align}\label{double star}
**:(-1)^{k(2m-k)}: \wedge^k\R^{2m}\rightarrow \wedge^k\R^{2m}.
\end{align}
(see e.g. \cite{Jost}).
\\\\
The following is the main result of this paper.		
		\begin{thm}\label{existence thm}
			Assume $m\geq 2,~n\in\N$. Let coefficient functions be given as 
			\begin{align*}
			w_k&\in W^{2k+2-m,2}(B^{2m}, \R^{n\times n})\qquad\qquad\qquad\text{for }k\in\{0,...,m-2\},
			\\
			V_k&\in W^{2k+1-m,2}(B^{2m}, \R^{n\times n}\otimes \wedge^1 \R^{2m} )\qquad\text{for }k\in \{0,...,m-1\}, \text{ where}
			\\
			V_0&= d\eta +F,~~\eta\in W^{2-m,2}(B^{2m}, so(n)),~~ F\in W^{2-m, \frac{2m}{m+1},1}(B^{2m}, \R^{n\times n}\otimes \wedge^1 \R^{2m})
			\end{align*}
			We consider the equation 
			\begin{align}\label{m poly system}
			\Delta^m u&= \sum_{k=0}^{m-1} \Delta^k \langle V_k, du\rangle + \sum_{k=0}^{m-2} \Delta^k \delta (w_k du).
			\end{align}
			For this equation, the following statements hold.
			\begin{itemize}
				\item[(i)] Let 	
				\begin{align}\label{sigma abs v w}\begin{split}
				\sigma &:= \sum_{k=0}^{m-2} ||w_k||_{W^{2k+2-m, 2}(B^{2m})} +\sum_{k=1}^{m-1}||V_k||_{W^{2k+1-m, 2}(B^{2m})} 
				\\
				&\qquad+ ||\eta||_{W^{2-m,2}(B^{2m})} + ||F||_{W^{2-m, \frac{2m}{m+1},1}(B^{2m})}.
				\end{split}
				\end{align}
				There is $\sigma_0>0$ such that whenever $\sigma<\sigma_0$, there exist
				 $\varepsilon\in W^{m,2}\cap L^\infty(B^{2m}_{1/2}; M(n))$ with
				\begin{align*}
				||\varepsilon||_{W^{m,2}(B^{2m}_{1/2})}+||\varepsilon||_{L^\infty(B^{2m}_{1/2})}\leq c\sigma,
				\end{align*}
				a function $P\in W^{m,2}(B_{1/2};SO(n))$ and a distribution $B\in W^{2-m,2}_{loc}(B^{2m}_{1/2}, \R^{n\times n}\otimes \wedge^2 \R^{2m})$ which solves
				\begin{align*}
				\delta B &= \sum_{k=0}^{m-1} \Delta^k 	((id+\varepsilon)P )V_k -\sum_{k=0}^{m-2} d \Delta^k	((id+\varepsilon)P )w_k  + d\Delta^{m-1}	((id+\varepsilon)P ).
				\end{align*}
			\item[(ii)] A function $u\in W^{m,2}(B^{2m}_{1/2}, \R^n)$ solves (\ref{m poly system}) weakly if and only if it is a distributional solution of the conservation law 
				\begin{align}\label{conservation law}\nonumber
				\delta&\bigg[\sum_{l=0}^{m-1}\Delta^l ((id+\varepsilon)P) \Delta^{m-l-1} du - \sum_{l=0}^{m-2} d\Delta^l((id+\varepsilon)P) \Delta^{m-l-1} u
				\\\nonumber
				&\qquad -\sum_{k=0}^{m-1}\sum_{l=0}^{k-1} \Delta^l ((id+\varepsilon)P )\Delta^{k-l-1} d\langle V_k, du\rangle
				\\\nonumber
				&\qquad +\sum_{k=0}^{m-1}\sum_{l=0}^{k-1} d\Delta^l ((id+\varepsilon)P)\Delta^{k-l-1} \langle V_k, du\rangle
				\\\nonumber 
				&\qquad - \sum_{k=0}^{m-2}\sum_{l=0}^{k} \Delta^l ((id+\varepsilon)P) d\Delta^{k-l-1}\delta (w_k du) 
				\\
				&\qquad+  \sum_{k=0}^{m-2}\sum_{l=0}^{k-1} d\Delta^l ((id+\varepsilon)P ) \Delta^{k-l-1}\delta (w_k du)   -\langle B,d u\rangle\bigg]=0.
				\end{align}
				
			\item[(iii)] Every weak solution $u$ of (\ref{m poly system}) is continuous. 
			\end{itemize}
		\end{thm}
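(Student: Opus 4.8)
The strategy follows the Rivière/Lamm–Rivière philosophy adapted to order $2m$: transform the system into divergence form by gauging with a perturbed Uhlenbeck matrix, extract the conservation law, and then run a Wente-type bootstrap to conclude continuity. I would organize the argument in three blocks corresponding to (i), (ii), (iii).

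For part (i), the plan is to apply a version of Uhlenbeck's gauge construction (Theorem \ref{Uhlenbeck thm}) to obtain $P\in W^{m,2}(B_{1/2};SO(n))$ with $d(P^{-1}dP)$ controlled by the antisymmetric part $d\eta$ of $V_0$, together with the smallness estimate from the $\sigma<\sigma_0$ hypothesis. The key novelty is that the algebraic system for $B$ — namely
\begin{align*}
\delta B &= \sum_{k=0}^{m-1} \Delta^k ((id+\varepsilon)P )V_k -\sum_{k=0}^{m-2} d \Delta^k ((id+\varepsilon)P )w_k + d\Delta^{m-1}((id+\varepsilon)P )
\end{align*}
is \emph{not} solvable for $B$ alone, because the right-hand side need not be co-closed. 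One therefore looks for the correction $\varepsilon$ so that $\delta$ of the right-hand side vanishes (the compatibility condition $\delta\delta B = 0$). This is a linear elliptic system for $\varepsilon$ of the schematic form $\Delta^m\varepsilon = (\text{smallness})\cdot\varepsilon + (\text{source in the }V_k,w_k,\eta)$; one solves it by a fixed-point / Neumann series argument in $W^{m,2}\cap L^\infty(B_{1/2})$, using Lemma \ref{Wente type lemma} and the Lorentz–Sobolev product and embedding estimates (Lemmas \ref{Prodsatz Sobolev Lorentz}, \ref{Lorentz Sobolev embedding}, \ref{Lorentz Sobolev prod}) to close the estimate $\|\varepsilon\|_{W^{m,2}}+\|\varepsilon\|_{L^\infty}\le c\sigma$ when $\sigma<\sigma_0$. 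Once $\delta$ of the right-hand side is zero, the Hodge decomposition on $B_{1/2}$ produces $B\in W^{2-m,2}_{\loc}$ with the stated equation, since $\delta$ maps onto co-closed $2$-forms.

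For part (ii), the plan is a direct, if lengthy, computation. Starting from \eqref{m poly system}, multiply (i.e. contract) by $(id+\varepsilon)P$ and commute the $\Delta^k$'s past the gauge factor, repeatedly using $\Delta = d\delta + \delta d$, the product rule, and the defining relation for $B$ from (i) to cancel exactly the terms that would otherwise obstruct writing everything as $\delta[\cdots]$. Each commutator $\Delta^k(AB) - A\Delta^k B$ telescopes into the double sums over $l$ that appear in \eqref{conservation law}; the terms with $\langle V_k,du\rangle$ and $\delta(w_k\,du)$ generate the four double-sum blocks, and the $-\langle B, du\rangle$ term absorbs precisely the leftover from the $\delta B$ identity. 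The ``if and only if'' is then immediate: since $(id+\varepsilon)P$ is invertible (as $\|\varepsilon\|_\infty\le c\sigma<1$), one can reverse every step, so the conservation law \eqref{conservation law} is equivalent to \eqref{m poly system}. The main bookkeeping obstacle here is getting the index ranges in the double sums exactly right — in particular the asymmetry between the $\sum_{l=0}^{k}$ in the $d\Delta^{k-l-1}\delta(w_k du)$ block and the $\sum_{l=0}^{k-1}$ elsewhere, which comes from the extra $\delta$ in the $w_k$-terms.

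For part (iii), the plan is the standard $\ep$-regularity bootstrap: localize so that $\sigma$ on small balls is below $\sigma_0$, then read \eqref{conservation law} as $\delta[\cdots] = 0$, i.e. the bracket is co-closed, hence (Poincaré lemma for $\delta$ on balls) equal to $\delta$ of some potential, giving a second-order — really $2m$-th order — Hodge system for $u$ coupled to an auxiliary potential. Rewriting this in the form $\Delta(\Delta^{m-1}u\cdot P) = \delta f$ with $f$ built from $B$, the $V_k$, $w_k$ and lower-order terms of $u$, one applies Lemma \ref{Wente type lemma} (the perturbed Wente lemma) to gain that $D^{2m-1}u\in L^{\frac{2m}{2m-1},1}_{\loc}$, and then iterates: each application of the Lorentz–Sobolev product estimates (Lemmas \ref{Prodsatz Sobolev Lorentz}, \ref{Lorentz Sobolev prod}) upgrades the integrability exponent of $u$'s top derivatives until one reaches $u\in W^{m,\frac{2m}{m},1}_{\loc}$, at which point the optimal embedding $W^{k,\frac{n}{k},1}(B^n)\hookrightarrow C^0$ (the unnamed Lemma after Lemma \ref{Prodsatz Sobolev Lorentz}, with $k=m$, $n=2m$) yields continuity. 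I expect the genuinely hard part of the whole theorem to be part (i) — constructing $\varepsilon$ and verifying the compatibility condition that makes $B$ exist — because that is where the negative-exponent Lorentz–Sobolev calculus and the smallness threshold interact most delicately; parts (ii) and (iii) are, respectively, a long but mechanical computation and a by-now-standard iteration.
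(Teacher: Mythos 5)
Your overall architecture coincides with the paper's: gauge the equation with $(id+\varepsilon)P$, choose $\varepsilon$ so that the codifferential of the right-hand side of the $\delta B$ identity vanishes (this is exactly \eqref{epsilon dgl}), obtain $B$ from the Poincar\'e lemma, deduce (ii) from the invertibility of $(id+\varepsilon)P$, and get (iii) by localization plus a regularity step. However, there is a genuine gap at the most delicate point of part (i), namely the treatment of the $d\eta$ part of $V_0$. You list $\eta$ among the \emph{sources} of the fixed-point equation for $\varepsilon$, but the term $(id+\varepsilon)P\,d\eta$ lies only in $L^\infty\cdot W^{1-m,2}$, which is one derivative too singular to be placed in $W^{2-m,\frac{2m}{m+1},1}$ (equivalently, in $L^{\frac{2m}{2m-1-|\gamma|},1}$ after shifting derivatives), the space Lemma \ref{Wente type lemma} requires; smallness of $\|\eta\|_{W^{2-m,2}}$ does not help, since the obstruction is the function space and not the size. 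The point of the gauge is that this term is cancelled \emph{exactly}: one first solves $\Delta^{m-2}\delta\Omega=-\eta$, applies Theorem \ref{Uhlenbeck thm} to $\Omega$ to obtain $dP=P\Omega-\delta\xi P$, and then expands the top-order term $d\Delta^{m-1}((id+\varepsilon)P)$ so that $(id+\varepsilon)\,d\Delta^{m-1}P$ produces $-(id+\varepsilon)P(V_0-F)$ plus admissible commutator terms, killing $(id+\varepsilon)P\,d\eta$ against the $k=0$ summand and leaving only $(id+\varepsilon)PF$; this is precisely how \eqref{pde without V0} is derived. Your sketch never performs this cancellation, and without it the fixed point cannot close.

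A second, related omission: even after this cancellation the coefficients of the $\varepsilon$-equation are genuine distributions in negative Lorentz--Sobolev spaces, so a contraction in $W^{m,2}\cap L^\infty$ as you propose does not directly make sense. The paper writes $\varepsilon$ and all distributional coefficients as finite sums $\sum_{|\alpha|\leq m-2}\p^\alpha(\cdot)_\alpha$ of Lorentz functions (Lemma \ref{Lorentz Sobolev distr darstell}), shifts derivatives onto the regular factors, and runs the fixed point for the resulting \emph{system} \eqref{pde fixed point eps} in the components $\varepsilon_\gamma\in W^{2m-1,\frac{2m}{2m-1-|\gamma|},1}$; this is also where Lemma \ref{Wente type lemma} is actually invoked, for the operator $\Delta(\Delta^{m-1}(\cdot)\cdot P)$ rather than for $\Delta^m$. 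Your parts (ii) and (iii) are consistent with the paper; for (iii) the paper uses a single regularity step for $-\Delta f=\delta C$ with $f=(id+\varepsilon)P\Delta^{m-1}u$, reaching $u\in W^{m+1,\frac{2m}{m+1},1}\hookrightarrow C^0$ on smaller balls, rather than your proposed iteration up to $W^{m,2,1}$, but both routes are viable once part (i) is in place.
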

A different variant of this result has been obtained earlier by Lamm and Rivi\`ere \cite{LammRiv} in the case $m=2$ and by De Longueville and Gastel \cite{LongGas} for general $m$. The key difference to these papers is that we use a small perturbation $(id+\varepsilon)P$ of the Uhlenbeck gauge matrix $P$, see Theorem \ref{Uhlenbeck thm}, to establish the conservation law. This Ansatz highlights the strong connection between the conservation law and the matrix $P$ more explicitly than the previous papers.
Another new ingredient in our approach is Lemma \ref{Wente type lemma}, a generalization of an estimate by Bethuel and Ghidaglia \cite{BethuelGhidaglia93}, which we use instead of  a Wente type result for the poly-Laplace operator. This allows for  more general elliptic operators in divergence form and  simplifies the argument.

We also remark that in a recent paper by Guo and Xiang \cite{Guo2020} it was shown that weak solutions of \eqref{m poly system} are not only continuous but even H\"older continuous for some positive exponent.

\section{Second order case}\label{section second order case}

In this section we briefly review the second order case of the main Theorem \ref{existence thm}.  We will not discuss the original proof in \cite{Riviere07} but we will focus on Rivi\`ere's  subsequent idea to establish a conservation law by using a small perturbation of the Uhlenbeck gauge matrix $P$. This proof was already sketched in \cite{RivLecture}, chapter IX, but since we will follow the same strategy in the proof of our main Theorem we decided to include this argument here.

\begin{thm}\label{riv cons neuer Ansatz thm}
	Let $n\in \N$ and $N$ be an oriented submanifold of $\R^n$.   Let $u\in W^{1,2}(B^2, N)$ be a solution of 
	\begin{align}\label{harm omega eqn thm}
	-\Delta u=\Omega\cdot \nabla u,
	\end{align}
	where $\Omega\in L^2(B^2, so(n)\otimes \wedge^1 \R^2)$
	and let $\sigma := ||\Omega||_{L^2}$. There exists $\sigma_0>0$ such that whenever $\sigma<\sigma_0$, there exist $\varepsilon\in W^{1,2}\cap L^\infty(B^2, M(n)),~P\in W^{1,2}(B^2, SO(n))$ and $\xi\in W^{1,2}(B^2,so(n))$ with 
	\begin{align*}
	||\varepsilon||_{L^\infty}+ ||\nabla \varepsilon||_{L^2} + 	||\xi||_{W^{1,2}(B^2)}+ ||\nabla P||_{L^2(B^2)}\leq c\sigma,
	\end{align*}
	and $ B\in W^{1,2}(B^2)$ that solve 
	\begin{align*}
	\nabla^\perp B= \nabla \varepsilon P -(id+\varepsilon)\nabla^\perp \xi P.
	\end{align*} 
	Further $u$ solves (\ref{harm omega eqn thm}) if and only if it is a solution of 
	\begin{align*}
	-div((id+\varepsilon)P\nabla u)=\nabla^\perp B\cdot \nabla u
	\end{align*}
	and $u$ is continuous.
\end{thm}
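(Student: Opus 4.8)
The plan is to follow Rivière's perturbative scheme: fix the Uhlenbeck gauge of $\Omega$, correct it by a small matrix $\varepsilon$ so that $A:=(id+\varepsilon)P$ enters a divergence‑form identity, extract $B$ from that identity, and then bootstrap the regularity of $u$. Since $\sigma=\|\Omega\|_{L^2}$ is small, Uhlenbeck's theorem (in the variant of Theorem \ref{Uhlenbeck thm}) supplies $P\in W^{1,2}(B^2,SO(n))$ and $\xi\in W^{1,2}(B^2,so(n))$ with
\[
\nabla P=P\Omega-\nabla^\perp\xi\,P ,\qquad \|\nabla P\|_{L^2(B^2)}+\|\xi\|_{W^{1,2}(B^2)}\le c\sigma ,
\]
the bound on $\xi$ using the normalization $\int_{B^2}\xi=0$; equivalently $(\nabla P)P^{-1}+\nabla^\perp\xi=P\Omega P^{-1}$, which is the Coulomb condition $\Div\big((\nabla P)P^{-1}-P\Omega P^{-1}\big)=0$ on the simply connected ball.

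Next I would construct $\varepsilon$. With $A=(id+\varepsilon)P$ the gauge relation yields the algebraic identity
\[
\nabla A-A\Omega=\nabla\varepsilon\,P+(id+\varepsilon)(\nabla P-P\Omega)=\nabla\varepsilon\,P-(id+\varepsilon)\nabla^\perp\xi\,P ,
\]
so this $M(n)$‑valued one‑form is of the form $\nabla^\perp B$ exactly when it is divergence free, which amounts to the quasilinear elliptic equation
\[
\Delta\varepsilon=\Big((\nabla\varepsilon\cdot\nabla^\perp\xi)P+(id+\varepsilon)(\nabla^\perp\xi\cdot\nabla P)-\nabla\varepsilon\cdot\nabla P\Big)P^{-1}\quad\text{in }B^2 ,\qquad\varepsilon=0\ \text{on }\partial B^2 .
\]
I would solve this by a fixed‑point (or continuity) argument exploiting $\|\nabla P\|_{L^2},\|\nabla\xi\|_{L^2}\le c\sigma$, working in the Lorentz–Sobolev space $W^{1,2,1}(B^2)$, which in dimension two is contained in $C^0\subset L^\infty$. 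The key feature is that $\nabla\varepsilon\cdot\nabla^\perp\xi$ and $\nabla^\perp\xi\cdot\nabla P$ have the antisymmetric Jacobian structure $\nabla f\cdot\nabla^\perp g$, so by Wente's inequality — the second‑order model for Lemma \ref{Wente type lemma} — their contribution to the solution lies in $W^{1,2}\cap L^\infty$ with norm bounded by a product of two small $L^2$‑norms; the remaining bilinear term $\nabla\varepsilon\cdot\nabla P$ carries a factor of size $\sigma$ and is absorbed by standard (Lorentz‑)elliptic estimates once $\sigma<\sigma_0$. This produces a unique $\varepsilon\in W^{1,2}\cap L^\infty(B^2,M(n))$ with $\|\varepsilon\|_{L^\infty}+\|\nabla\varepsilon\|_{L^2}\le c\sigma$; shrinking $\sigma_0$, both $A$ and $A^{-1}$ are bounded and uniformly close to $SO(n)$.

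Since the one‑form $\nabla\varepsilon\,P-(id+\varepsilon)\nabla^\perp\xi\,P$ is divergence free on $B^2$, the Poincaré lemma provides $B\in W^{1,2}(B^2,M(n))$, normalized by $\int_{B^2}B=0$, with $\nabla^\perp B=\nabla\varepsilon\,P-(id+\varepsilon)\nabla^\perp\xi\,P=\nabla A-A\Omega$ and $\|B\|_{W^{1,2}}\le c(\|\nabla\varepsilon\|_{L^2}+\|\nabla\xi\|_{L^2})\le c\sigma$. Substituting $-\Delta u=\Omega\cdot\nabla u$ into $\Div(A\nabla u)=\nabla A\cdot\nabla u+A\Delta u$ and using $\nabla A-A\Omega=\nabla^\perp B$, a direct computation gives the equivalence of \eqref{harm omega eqn thm} with the divergence‑form equation of the statement; the converse implication uses that $A$ is invertible with $A^{-1}(A\Omega)=\Omega$. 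Finally, writing $\nabla^\perp B\cdot\nabla u$ as a divergence, the conservation law can be recast as $\Div(A\nabla u+B\nabla^\perp u)=0$, so there is $C$ with $\nabla^\perp C=A\nabla u+B\nabla^\perp u$, whence $\nabla u=A^{-1}\nabla^\perp C-A^{-1}B\nabla^\perp u$. Since $A,A^{-1}\in W^{1,2}\cap L^\infty$ and $B\in W^{1,2}$, the right‑hand side carries exactly the div–curl structure treated by the usual Wente‑type bootstrap (or a Morrey‑decay argument), which upgrades $\nabla u\in L^2_{\loc}$ to $\nabla u\in L^{2,1}_{\loc}(B^2)$; by the embedding $W^{1,2,1}(B^2)\hookrightarrow C^0$, $u$ is continuous.

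The main obstacle is the construction of $\varepsilon$: the equation for $\varepsilon$ is quasilinear and, without the compensated‑compactness structure of its $\nabla^\perp\xi$‑terms, its right‑hand side is merely integrable, so the bound $\|\varepsilon\|_{L^\infty}\le c\sigma$ cannot follow from naive elliptic theory. Arranging the fixed point so that it genuinely closes in $W^{1,2}\cap L^\infty$ — which forces one into the Lorentz–Sobolev scale, where Wente's inequality supplies the $L^\infty$‑gain — while keeping every constant of order $\sigma$, is the technical heart, and it is precisely here that the paper's new input (perturbing the Uhlenbeck matrix rather than constructing $A$ from scratch) and Lemma \ref{Wente type lemma} come into play.
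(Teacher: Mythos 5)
Your overall architecture coincides with the paper's: Uhlenbeck gauge for $\Omega$, a perturbation $\varepsilon$ determined by a fixed point so that $\nabla\varepsilon\,P-(id+\varepsilon)\nabla^{\perp}\xi\,P$ becomes divergence free, the Poincar\'e lemma to produce $B$, and a div--curl bootstrap for continuity. However, the way you set up the fixed point contains a genuine gap. You rewrite the divergence-free condition as
\begin{align*}
\Delta\varepsilon=\Big(\nabla\varepsilon\cdot\nabla^{\perp}\xi\,P+(id+\varepsilon)\,(\nabla^{\perp}\xi\cdot\nabla P)-\nabla\varepsilon\cdot\nabla P\Big)P^{-1},
\end{align*}
and claim that the term $\nabla\varepsilon\cdot\nabla P$ is ``absorbed by standard (Lorentz-)elliptic estimates'' because it carries a factor of size $\sigma$. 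This step fails. The product $\nabla\varepsilon\cdot\nabla P$ is a full contraction of gradients, not a Jacobian $\nabla f\cdot\nabla^{\perp}g$, so it has no compensated-compactness structure: with $\nabla\varepsilon\in L^{2}$ (or even $L^{2,1}$) and $\nabla P\in L^{2}$ it lies only in $L^{1}(B^{2})$, and in two dimensions $\Delta^{-1}$ does not map $L^{1}$ (nor $L^{1,q}$ for any $q$) into $W^{1,2}\cap L^{\infty}$ --- the best one gets is $\nabla\lambda\in L^{2,\infty}$. Smallness of the $L^{1}$ norm is irrelevant: the obstruction is the mapping property, not the size of the constant, so the iteration cannot close in $W^{1,2}\cap L^{\infty}$ (or in $W^{1,(2,1)}$). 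A similar, milder issue affects $(id+\varepsilon)(\nabla^{\perp}\xi\cdot\nabla P)P^{-1}$: a Jacobian multiplied on both sides by merely $W^{1,2}\cap L^{\infty}$ matrices is no longer directly amenable to Wente's inequality.

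The paper avoids this precisely by \emph{not} expanding the principal part: the iteration map solves $\Div\big[\nabla\lambda\,P\big]=\nabla\big((id+\varepsilon)P\big)\cdot\nabla^{\perp}\xi$ with $\lambda=0$ on $\partial B^{2}$, so the problematic product $\nabla\lambda\cdot\nabla P$ stays inside the operator $\Div(\,\cdot\,P)$ and the entire right-hand side is a genuine sum of Jacobians of $W^{1,2}$ functions. Theorem~1.3 of Bethuel--Ghidaglia then gives exactly the $L^{\infty}\cap W^{1,2}$ bound with a product of two $L^{2}$ norms, which is what makes the map a contraction for small $\sigma$. Your argument is repairable --- you already have all the right objects, you only need to keep $P$ inside the divergence and invoke the Bethuel--Ghidaglia estimate (the second-order instance of Lemma~\ref{Wente type lemma}) instead of treating $\nabla\varepsilon\cdot\nabla P$ as a source term --- but as written the construction of $\varepsilon$ does not go through. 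The remaining steps (Poincar\'e lemma, equivalence of the two equations via invertibility of $(id+\varepsilon)P$, and the Hodge/Wente bootstrap for continuity) are correct up to harmless sign conventions and match the paper.
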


The proof of Theorem \ref{riv cons neuer Ansatz thm} relies heavily on Uhlenbeck's gauge theorem, see  for example \cite{Riviere07,Uhlenbeck82,Schikorra10}. 

\begin{thm}[Uhlenbeck gauge]\label{Uhlenbeck for harm}
	There exists $\sigma>0$ and $c>0$ such that for every $\Omega\in L^2 (B^2, so(n)\otimes \wedge^1 \R^2)$ satisfying $||\Omega||_{L^2(B^2)}<\sigma$ there exist $P\in W^{1,2}(B^2, SO(n))$ and $\xi\in W^{1,2}(B^2, so(n))$ such that 
	\begin{align*}
	\Omega&=P^{-1}\nabla^\perp \xi P+ P^{-1}\nabla P
	\intertext{and}
	||\xi||_{W^{1,2}(B^2)}&+ ||\nabla P||_{L^{2}(B^2)}\leq c ||\Omega||_{L^2(B^2)}.
	\end{align*}
\end{thm}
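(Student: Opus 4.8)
\textbf{Proof proposal for Theorem \ref{Uhlenbeck for harm} (Uhlenbeck gauge).}

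The plan is to run Uhlenbeck's continuity method on the two-dimensional ball. First I would reformulate the desired identity: writing $P$ for the gauge matrix, the condition $\Omega = P^{-1}\nabla^\perp\xi\, P + P^{-1}\nabla P$ is equivalent to requiring that the pulled-back connection $\widetilde\Omega := P\,\Omega\, P^{-1} - \nabla P\, P^{-1}$ be co-closed in the Coulomb sense, i.e. $\operatorname{div}\widetilde\Omega = 0$ together with a suitable boundary condition, and then $\xi$ is recovered as the (Hodge) stream function of the $so(n)$-valued divergence-free one-form $\widetilde\Omega$ via $\widetilde\Omega = \nabla^\perp\xi$ on the simply connected domain $B^2$, with $\xi$ normalized to have zero mean. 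So the real content is: for $\Omega$ small in $L^2$, find $P\in W^{1,2}(B^2,SO(n))$ with $P\,\Omega\,P^{-1}-\nabla P\,P^{-1}$ divergence free and $\partial_\nu$-type boundary data vanishing, together with the quantitative bound $\|\nabla P\|_{L^2}\le c\|\Omega\|_{L^2}$.

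Next I would set up the continuity argument. Consider the family $\Omega_t := t\,\Omega$ for $t\in[0,1]$ and let
\[
I := \{ t\in[0,1] : \exists\, P_t\in W^{1,2}(B^2,SO(n)) \text{ solving the Coulomb gauge problem for } \Omega_t \text{ with } \|\nabla P_t\|_{L^2}\le C\|\Omega_t\|_{L^2}\}.
\]
Clearly $0\in I$ (take $P_0\equiv id$). The set $I$ is open in $[0,1]$: at a solution $(t_0,P_{t_0})$, linearizing the Coulomb condition in the gauge parameter yields a linear elliptic operator on $W^{1,2}$ (essentially a Hodge Laplacian with Neumann-type boundary conditions acting on the Lie-algebra-valued gauge) which is invertible on the complement of constants, so the implicit function theorem in the Banach space $W^{1,2}(B^2,so(n))$ gives nearby solutions for $t$ near $t_0$; here the smallness of $\Omega$ guarantees the linearization stays close to the model operator and hence invertible. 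The set $I$ is closed: given $t_j\to t_*$ with solutions $P_{t_j}$ obeying the uniform bound $\|\nabla P_{t_j}\|_{L^2}\le C\,t_j\|\Omega\|_{L^2}$, one extracts a weak $W^{1,2}$ limit $P_{t_*}$; Rellich compactness upgrades this to strong $L^2$ (and a.e.) convergence so that $P_{t_*}\in SO(n)$ a.e., and the Coulomb equation passes to the limit distributionally. The crucial quantitative point — that the a priori bound $\|\nabla P_t\|_{L^2}\le C\|\Omega_t\|_{L^2}$ is actually \emph{improved} rather than merely preserved along $I$, so one does not lose the constant in the limit — comes from testing the Coulomb equation against $P_t$ itself and invoking the Wente-type $L^2$-estimate for Jacobian-structured right-hand sides in two dimensions (the $div$-$curl$ estimate of Wente, available as cited in the paper): the nonlinear term $\nabla P_t\,P_t^{-1}\cdot(\text{stuff})$ has the algebraic structure making its relevant norm controlled by $\|\nabla P_t\|_{L^2}^2$, which for $\|\Omega\|_{L^2}<\sigma_0$ small can be absorbed, yielding $\|\nabla P_t\|_{L^2}\le 2\|\Omega_t\|_{L^2}$ and closing the bootstrap. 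Therefore $I=[0,1]$, and $t=1$ gives the matrix $P$; setting $\xi$ as the normalized stream function of $P\Omega P^{-1}-\nabla P\,P^{-1}$ and using the Poincaré inequality gives $\|\xi\|_{W^{1,2}}\le c\|\nabla P\|_{L^2}+c\|\Omega\|_{L^2}\le c\|\Omega\|_{L^2}$, completing all the claimed estimates.

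The main obstacle is the closedness step together with the non-loss of the constant: weak $W^{1,2}$ convergence alone does not control the nonlinear Coulomb equation, and the entire scheme collapses unless one shows the a priori estimate self-improves. This is exactly where the two-dimensional structure is essential — the gauge-transformed curvature term is a sum of Jacobian determinants, so Wente's inequality (equivalently, the duality between $\mathcal H^1$ and $BMO$, or the compensated-compactness estimate) upgrades an $L^1$-type bound to an $L^2$ bound with a small constant, allowing absorption. A secondary technical nuisance is handling the boundary condition for $P$ (Uhlenbeck works with a Neumann-type condition, $P\,\frac{\partial P^{-1}}{\partial\nu}$ tangential data, ensuring the stream function $\xi$ is well defined and the elliptic linearization is Fredholm); one must verify this boundary data is preserved along the continuity family and is compatible with the integration-by-parts identities used in both the openness and the estimate. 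I would also note the standard subtlety that $W^{1,2}\not\hookrightarrow L^\infty$ in dimension two, so one cannot directly make sense of $P^{-1}$ pointwise a priori; this is circumvented by building $P$ as a limit of smooth approximations (mollify $\Omega$, solve for smooth $\Omega_\varepsilon$ where everything is classical, pass $\varepsilon\to0$ using the uniform estimate), which is the route I would actually implement to make the continuity argument rigorous.
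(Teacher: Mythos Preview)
The paper does not prove Theorem~\ref{Uhlenbeck for harm}; it is stated as a known result and the reader is pointed to \cite{Riviere07,Uhlenbeck82,Schikorra10} for the argument. So there is no ``paper's own proof'' to compare against---only the cited literature.

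Your outline is the classical Uhlenbeck continuity method from \cite{Uhlenbeck82}, transported to the present two-dimensional $L^2$ setting as in \cite{Riviere07}, and in its broad strokes it is correct: reformulate as a Coulomb gauge problem for $\widetilde\Omega = P\Omega P^{-1}-\nabla P\,P^{-1}$, run continuity in $t$, get openness from the implicit function theorem applied to the linearised Coulomb condition, and get closedness from a uniform a~priori estimate plus weak compactness. One point is overstated, however: the self-improving a~priori bound does not really hinge on Wente's inequality. Once you are in Coulomb gauge with the appropriate Neumann-type boundary condition, $\widetilde\Omega$ is the $L^2$-minimal representative of the gauge orbit, so $\|\widetilde\Omega\|_{L^2}\le\|\Omega\|_{L^2}$ directly; combining this with $\nabla P\,P^{-1}=P\Omega P^{-1}-\widetilde\Omega$ and $|P\Omega P^{-1}|=|\Omega|$ pointwise already yields $\|\nabla P\|_{L^2}\le 2\|\Omega\|_{L^2}$. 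The div--curl/Wente structure is rather what is used to upgrade regularity of $P$ (and hence to make the implicit function theorem step work at the borderline $W^{1,2}$ regularity in two dimensions), not to produce the basic energy estimate. Your smoothing-then-limit workaround for the $W^{1,2}\not\hookrightarrow L^\infty$ issue is the standard fix.

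It is also worth noting that among the references the paper cites, \cite{Schikorra10} gives a genuinely different route: a direct variational argument, minimising $\int_{B^2}|P^{-1}\nabla P + P^{-1}\Omega P|^2$ over $P\in W^{1,2}(B^2,SO(n))$, whose Euler--Lagrange equation is exactly the Coulomb condition. That proof avoids the continuity method entirely and is shorter, at the cost of being less adaptable to higher-order or higher-dimensional settings where your approach (and Uhlenbeck's original) generalises more naturally---see Theorem~\ref{Uhlenbeck thm} in the paper for the $W^{m-1,2}$ analogue actually needed later.
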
 
\bigskip 

\begin{proof}[Proof of Theorem \ref{riv cons neuer Ansatz thm}:]

Assume $||\Omega||_{L^2(B^2)}<\sigma$ as in Theorem \ref{Uhlenbeck for harm}. Then we get $P\in W^{1,2}(B^2, SO(n))$, $\xi\in W^{1,2}(B^2, so(n))$ such that 
\begin{align*}
\Omega=&P^{-1}\nabla^\perp \xi P+ P^{-1}\nabla P \ \ \ \text{and}
\\
||\xi||_{W^{1,2}(B^2)}&+||\nabla P||_{L^{2}(B^2)}\leq c ||\Omega||_{L^2(B^2)}.
\end{align*} 
We multiply (\ref{harm omega eqn thm}) with $(id+\varepsilon)P$, where $\varepsilon\in W^{1,2}\cap L^\infty(B^2, M(n))$ and $id$ is the identity matrix in $\R^n$, and obtain
\begin{align}\nonumber
-(id+\varepsilon)P \Delta u&= (id+\varepsilon)P\Omega\cdot \nabla u
\\\nonumber
\Leftrightarrow -div\left[ (id+\varepsilon) P \nabla u\right]
&= \left[ -\nabla\varepsilon P + (id+\varepsilon)(-\nabla P+P\Omega)\right]\cdot \nabla u
\\\label{almost div harm}
\Leftrightarrow -div\left[ (id+\varepsilon)P  \nabla u\right]&= \left[ -\nabla \varepsilon P +(id+\varepsilon)\nabla^\perp \xi P\right]\cdot \nabla u.
\end{align}
We choose $\varepsilon\in W^{1,2}\cap L^\infty(B^2, M(n))$ such that 
\begin{align}\label{div eps harm}
div\left[ -\nabla \varepsilon P +(id+\varepsilon)\nabla^\perp \xi P\right]=0.
\end{align}
To do this we apply a fixed point argument. Let
\begin{align*}
\psi: W^{1,2}\cap L^\infty(B^2)&\rightarrow W^{1,2}\cap L^\infty(B^2)
\\
\varepsilon&\mapsto \text{solution $\lambda$ of (\ref{fixpt harm}) }
\end{align*} 
where
\begin{align}\label{fixpt harm}
\bigg\{\begin{split}
div[ \nabla\lambda P]&= \nabla( (id+\varepsilon)P)\cdot  \nabla^\perp \xi \qquad \text{in }B^2,
\\
\lambda&=0\qquad\qquad\qquad\qquad \qquad\text{on }\partial B^2.
\end{split}
\end{align}

Let $\varepsilon_1,\varepsilon_2\in W^{1,2}\cap L^\infty(B^2)$ and $\psi(\varepsilon_1)=\lambda_1,~ \psi(\varepsilon_2)=\lambda_2$ be the corresponding solutions of (\ref{fixpt harm}). Then $\Lambda:=\lambda_1-\lambda_2$ solves 
\begin{align*}
\bigg\{\begin{split}
div[ \nabla\Lambda P]&= \nabla( (\varepsilon_1-\varepsilon_2)P)\cdot  \nabla^\perp \xi \qquad \text{in }B^2,
\\
\Lambda&=0\qquad\qquad\qquad\qquad \qquad\text{on }\partial B^2.
\end{split}
\end{align*}
Since $P$ takes values in $SO(n)$ it satisfies the assumptions of Theorem 1.3 in \cite{BethuelGhidaglia93} and we have 
\begin{align*}
||\Lambda||_{L^\infty(B^2)}+||\nabla\Lambda||_{L^2(B^2)}&\leq c \bigg(||\nabla \varepsilon_1-\nabla \varepsilon_2||_{L^2(B^2)}||P||_{L^\infty(B^2)} 
\\
&\quad + ||\varepsilon_1-\varepsilon_2||_{L^\infty(B^2)} ||\nabla P||_{L^2(B^2)}\bigg)\cdot  ||\nabla\xi||_{L^2(B^2)}
\\
&\leq c\sigma \left(||\nabla \varepsilon_1-\nabla \varepsilon_2||_{L^2(B^2)} + ||\varepsilon_1-\varepsilon_2||_{L^\infty(B^2)} \right).
\end{align*}
For $\sigma$ small enough we conclude that $\psi$ is a contraction. To show that $\psi$ is a self-map from a small ball in $ W^{1,2}\cap L^\infty(B^2)$ into itself, we use again Theorem 1.3 in \cite{BethuelGhidaglia93} to get
\begin{align*}
	||\lambda||_{L^\infty(B^2)}+||\nabla\lambda||_{L^2(B^2)}&\leq c  ||\nabla\xi||_{L^2(B^2)}\big( ||\nabla \varepsilon||_{L^2(B^2)}
	\\
	&\quad +(1+||\varepsilon||_{L^{\infty}(B^2)})||\nabla P||_{L^2(B^2)}\big).
\end{align*}
The Banach fixed point theorem yields a unique $\varepsilon^*\in W^{1,2}\cap L^\infty(B^2, M(n))$ solving \eqref{fixpt harm} and hence also (\ref{div eps harm}) and with the estiamte above we get 
\begin{align*}
||\varepsilon^*||_{L^\infty}+ ||\nabla \varepsilon^*||_{L^2}\leq c\sigma.
\end{align*}
By the Poincar\'e lemma there exists $B\in W^{1,2}(B^2)$ such that 
\begin{align*}
\nabla^\perp B= -\nabla \varepsilon^* P+ (id+\varepsilon^*)\nabla^\perp \xi P
\end{align*}
and (\ref{harm omega eqn thm}) is equivalent to 
\begin{align*}
-div((id+\varepsilon^*)P \nabla u)=\nabla^\perp B \cdot \nabla u.
\end{align*}
Now that we have our equation in the desired divergence-free form, we can show the continuity of the solution $u$ using the Hodge decomposition (see Corollary 10.70 in \cite{GiaMarti})
\begin{align*}
(id+\varepsilon^*)P\nabla u= \nabla V+\nabla^\perp W
\end{align*}
and arguing as in \cite{Riviere07}.
\end{proof}

\section{Proof of Theorem \ref{existence thm}}\label{sec Proof}

We split the proof of this result into several steps and present each step in a separate subsection.

\subsection{Gauge fixing}
	
Following the work of de Longueville and Gastel in the proof of Theorem 4.1 (i) in \cite{LongGas} we repeatedly solve Neumann problems to find $\Omega\in W^{m-1,2}(B^{2m}, so(n)\otimes \wedge^1 \R^{2m})$ such that
	\begin{align}\label{Omega eta relation}
	\Delta^{m-2}\delta\Omega
	=-\eta \qquad\text{in }B^{2m} \ \ \ \text{and}
	\\
	||\Omega||_{W^{m-1,2}(B^{2m})}\leq c||\eta||_{W^{2-m,2}(B^{2m})}\leq c\sigma.
	\end{align}
	Next we need the following higher order version of the Uhlenbeck gauge fixing result which is due to De Longueville and Gastel.
	\begin{thm}[Theorem 2.4 in \cite{LongGas}]\label{Uhlenbeck thm}
			Assume that $m,n\in\N$ and $B_r\subset\R^{2m}$ is a ball of radius $r$. Then there is $\varepsilon>0$ such that for all $\Omega\in W^{m-1,2}(B_{r}, so(n)\otimes \wedge^1 \R^{2m})$ satisfying 
			\begin{align*}
			||\Omega||_{W^{m-1,2}(B_r)}<\varepsilon,
			\end{align*}
			there are functions $P\in W^{m,2}(B_{r/2};SO(n))$ and $\xi\in W^{m,2}(B_{r/2}, so(n)\otimes \wedge^2 \R^{2m})$ such that
			\begin{align}\label{Omega in Uhlenbeck thm}
			\Omega= PdP^{-1} +P \delta \xi P^{-1}
			\end{align}
			holds on $B_{r/2}$. Moreover, we have the estimate 
			\begin{align}\label{Uhlenbeck lemma abs}
			||dP||_{W^{m-1,2}(B_{r/2})} + ||\delta\xi||_{W^{m-1,2}(B_{r/2})}\leq c ||\Omega||_{W^{m-1,2}(B_r)}.
			\end{align}
		\end{thm}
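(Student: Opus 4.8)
The plan is to reduce the statement to the classical second-order Uhlenbeck theorem \cite{Uhlenbeck82} by a regularization-plus-continuity-method argument, exactly as in the original proof; since this result is quoted verbatim as Theorem~2.4 of \cite{LongGas}, I would follow their scheme. First I would normalize to the unit ball $B_1$ by scaling, and smooth out $\Omega$: approximate $\Omega$ in $W^{m-1,2}$ by smooth forms $\Omega_j$, for which the Coulomb gauge construction is classical, and prove uniform estimates that survive the limit $j\to\infty$. Thus the core of the argument is an \emph{a priori estimate}: if $\Omega\in W^{m-1,2}(B_1)$ is smooth with $\|\Omega\|_{W^{m-1,2}(B_1)}<\varepsilon$ and one has a gauge $P\in W^{m,2}(B_{1/2};SO(n))$, $\xi\in W^{m,2}(B_{1/2},so(n)\otimes\wedge^2\R^{2m})$ with $\Omega=PdP^{-1}+P\,\delta\xi\,P^{-1}$, then $\|dP\|_{W^{m-1,2}(B_{1/2})}+\|\delta\xi\|_{W^{m-1,2}(B_{1/2})}\le c\|\Omega\|_{W^{m-1,2}(B_1)}$.

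The key steps for the a priori estimate are as follows. Rewrite the gauge equation as $P^{-1}dP = P^{-1}\Omega P - \delta\xi$; since $dd=0$ this gives $d(P^{-1}\Omega P)=d(P^{-1}dP)+d\,\delta\xi$, and imposing the Coulomb condition we demand $\delta(P^{-1}dP)=\delta(P^{-1}\Omega P - \delta\xi)$ with $\delta\,\delta\xi=0$, which forces $\delta\xi$ to be determined by projecting $P^{-1}\Omega P$ onto the appropriate Hodge component. Concretely one sets up the elliptic system: $\Delta\xi$ (with suitable boundary conditions, say $\xi=0$ and appropriate Neumann-type conditions on $\partial B_{1/2}$) equals the curvature-type expression built from $\Omega$ and $dP$, while $P$ solves a first-order Coulomb-type system $\delta(P^{-1}dP)=$ (lower-order in $dP$) $+\,\delta(P^{-1}\Omega P)$. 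One then runs the standard bootstrap: for small $\|\Omega\|_{W^{m-1,2}}$, elliptic regularity for $\Delta^{?}$ together with the product estimates Lemma~\ref{Prodsatz Sobolev Lorentz}, Lemma~\ref{Lorentz Sobolev prod} and the Sobolev embeddings Lemma~\ref{pos. LorentzSobolev Embedding}, Lemma~\ref{Lorentz Sobolev embedding} let one absorb the quadratic terms $P^{-1}dP\cdot(\dots)$ and close the estimate. The openness part of the continuity method uses the implicit function theorem on the map $(P,\xi)\mapsto PdP^{-1}+P\delta\xi P^{-1}-\Omega$, whose linearization at the trivial gauge is (a shifted) Hodge Laplacian, hence invertible on the relevant spaces; the closedness part is precisely the a priori bound above.

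The main obstacle I anticipate is the functional-analytic bookkeeping in the bootstrap: one must choose, at each order, the exact Sobolev/Lorentz exponents so that every product of $dP$, $\delta\xi$ and $\Omega$ lands back in $W^{m-1,2}$ (or an embedding thereof) with a constant that is uniform as the smoothing parameter is removed, and so that the quadratic self-interaction terms carry a factor controlled by $\varepsilon$. Getting the boundary conditions right on $B_{r/2}$ — so that the uniqueness/solvability of the auxiliary $\xi$-problem holds and the trace terms in the integration by parts vanish — is the other delicate point; this is where passing from $B_r$ to the smaller ball $B_{r/2}$ is essential, since one needs interior estimates and a cutoff to avoid the lack of a global gauge. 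Everything else (scaling, smoothing, passing to the weak limit using weak-$W^{m,2}$ compactness and the fact that $SO(n)$ is closed) is routine once these estimates are in place.
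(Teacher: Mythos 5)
The paper does not prove this statement at all: it is imported verbatim as Theorem~2.4 of \cite{LongGas} (De Longueville--Gastel), and the authors use it as a black box. So there is no in-paper argument to compare yours with; the question is only whether your outline would reconstruct the proof in \cite{LongGas}. Your overall strategy --- normalize by scaling, approximate $\Omega$ by smooth data, run a continuity method in which openness comes from an implicit-function-theorem argument and closedness from an a priori estimate in $W^{m-1,2}$ --- is indeed the scheme of Uhlenbeck and of its higher-order adaptation, so the skeleton is right.

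However, as it stands the proposal has a genuine gap: the a priori estimate, which is the entire mathematical content of the theorem, is only asserted ("one then runs the standard bootstrap"), and it is precisely the step that does not follow from generalities. Two things must be made explicit. First, the structure of the $\xi$-equation: from $\delta\xi = P^{-1}\Omega P + P^{-1}dP$ one gets, choosing $\xi$ coclosed with suitable boundary conditions,
\begin{align*}
\Delta\xi \;=\; d\bigl(P^{-1}\Omega P\bigr) \;-\; \bigl(P^{-1}dP\bigr)\wedge\bigl(P^{-1}dP\bigr),
\end{align*}
and the quadratic term is a Jacobian-type nonlinearity; in the critical dimension $2m$ the product $\nabla^{j}P\star\nabla^{m-1-j}(dP)$ lands in $W^{m-1,2}$ only borderline, so one needs the compensated-compactness / Wente--type gain (or the Lorentz-space improvement $L^{m,1}$ vs.\ $L^{m,2}$) to absorb it with a factor $\|dP\|_{W^{m-1,2}}\le c\,\varepsilon$. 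Your text acknowledges the "bookkeeping" but does not exhibit the one product estimate that actually closes the loop, and your description of the Coulomb condition ($\delta(P^{-1}dP)=\delta(P^{-1}\Omega P-\delta\xi)$ "with $\delta\delta\xi=0$") is circular as written --- the gauge condition is $\delta\bigl(P^{-1}dP+P^{-1}\Omega P\bigr)=0$, and $\xi$ is then produced by Hodge decomposition, not imposed beforehand. Second, the openness step: the linearization is not literally a shifted Hodge Laplacian on matrix-valued forms; one must linearize through the exponential map on $SO(n)$-valued maps ($P=e^{u}Q$, $u\in so(n)$) and check invertibility of the resulting Neumann problem on the correct $W^{m,2}$ spaces. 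Without these two ingredients the plan does not yet constitute a proof.
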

We apply this result for $\sigma>0$ sufficiently small, and get $\xi\in W^{m,2}(B_{1/2}^{2m}, so(n)\otimes \wedge^2 \R^{2m})$ and $P\in W^{m,2}(B_{1/2}^{2m}, SO(n))$ such that
	\begin{align}\label{Omega decomp}\begin{split}
	dP&= P\Omega-\delta \xi P \ \ \ \text{and}
	\\
	||d P||_{W^{m-1,2}(B^{2m}_{1/2})}+ &||\delta \xi||_{W^{m-1,2}(B^{2m}_{1/2})}\leq c ||\Omega||_{L^{m-1,2}(B^{2m})}.
	\end{split}
	\end{align}
	\subsection{Rewriting the system}
	We let $\varepsilon\in W^{m,2}\cap L^\infty(B^{2m}_{1/2}, M(n))$ and we multiply (\ref{m poly system}) with $(id+\varepsilon)P$ and calculate
	\begin{align}\nonumber
	&(id+\varepsilon)P \Delta^m u= (id+\varepsilon)P \bigg[ \sum_{k=0}^{m-1} \Delta^k \langle V_k, du\rangle + \sum_{k=0}^{m-2} \Delta^k \delta (w_k du)\bigg]
	\\\nonumber
	\Leftrightarrow &\bigg[\sum_{k=0}^{m-1} \Delta^k 	((id+\varepsilon)P )V_k -\sum_{k=0}^{m-2} d \Delta^k	((id+\varepsilon)P )w_k  + d\Delta^{m-1}	((id+\varepsilon)P )\bigg]\cdot d u 
	\\\label{cons law first eqn} \begin{split}
	&= \delta\bigg[ \sum_{l=0}^{m-1}\Delta^l ((id+\varepsilon)P) \Delta^{m-l-1} du - \sum_{l=0}^{m-2} d\Delta^l((id+\varepsilon)P) \Delta^{m-l-1} u
	\\
	&\qquad -\sum_{k=0}^{m-1}\sum_{l=0}^{k-1} \Delta^l ((id+\varepsilon)P )\Delta^{k-l-1} d\langle V_k, du\rangle
	\\
	&\qquad +\sum_{k=0}^{m-1}\sum_{l=0}^{k-1} d\Delta^l ((id+\varepsilon)P)\Delta^{k-l-1} \langle V_k, du\rangle
	\\
	&\qquad - \sum_{k=0}^{m-2}\sum_{l=0}^{k} \Delta^l ((id+\varepsilon)P) d\Delta^{k-l-1}\delta (w_k du) 
	\\
	&\qquad+  \sum_{k=0}^{m-2}\sum_{l=0}^{k-1} d\Delta^l ((id+\varepsilon)P ) \Delta^{k-l-1}\delta (w_k du)  
	\bigg].
	\end{split}
	\end{align}
	The right-hand side of this system is already in divergence form, hence in order to obtain a conservation law we need to find $\varepsilon\in W^{m,2}\cap L^{\infty}(B^{2m}_{1/2}, M(n))$ such that
	\begin{align}\label{epsilon dgl}
	\delta\bigg[\sum_{k=0}^{m-1} \Delta^k 	((id+\varepsilon)P )V_k -\sum_{k=0}^{m-2} d \Delta^k	((id+\varepsilon)P )w_k  + d\Delta^{m-1}	((id+\varepsilon)P )\bigg]=0
	\end{align}
	on $B^{2m}_{1/2}$. As in section \ref{section second order case} we want to apply a fixed point argument to solve this problem. However to do this we need to have a certain control on the terms in (\ref{epsilon dgl}) and the terms involving $V_0$ are problematic. We know that $V_0=d\eta +F$ and we control $F\in W^{2-m,\frac{2m}{m+1},1}(B^{2m})$ by (\ref{sigma abs v w}) but $d\eta\in W^{1-m,2}(B^{2m})$ is a priori not bounded. Thus our goal is to remove $d\eta$.
	\\
	To do this we take a closer look at $d\Delta^{m-1}((id+\varepsilon)P)$ and note that we can rewrite the highest order term $(id+\varepsilon)d\Delta^{m-1}P$ so that it cancels $(id+\varepsilon)Pd\eta$ in (\ref{epsilon dgl}). To see this we use (\ref{delta def}), (\ref{double star}) as well as (\ref{Omega eta relation}) and (\ref{Omega decomp}) .
	\begin{align*}
	d \Delta^{m-1}P& = d\Delta^{m-2}\delta\left(P\Omega-\delta \xi P\right)
	\\
	&=d\Delta^{m-2}(dP\Omega)+ d\Delta^{m-2}(P\delta \Omega) - d\Delta^{m-2}(*d*(*d*\xi P))
	\\
	&=\sum_{i=1}^{2m-2}c_i\nabla^i P\nabla^{2m-2-i}\Omega - d(P\eta) + d\Delta^{m-2}(*(d*\xi\wedge dP))
	\\
	&= \sum_{i=1}^{2m-2}c_i\nabla^i P\nabla^{2m-2-i}\Omega - dP\eta -P(V_0-F)+ d\Delta^{m-2}(*(d*\xi\wedge dP)),
	\end{align*}
	with constants  $c_i\in \N_0$, $1\le i \le 2m-2$ and 
	\begin{align*}
	\nabla^{k}=\begin{cases}
	\Delta^{\frac{k}{2}},\qquad&\text{if $k$ even},
	\\
	d\Delta^{\frac{k-1}{2}},\qquad &\text{if $k$ odd.}
	\end{cases}
	\end{align*}
	Plugging this back into (\ref{epsilon dgl}) and rearranging we get
	\begin{align}\label{pde without V0}\nonumber
	\Delta (\Delta^{m-1}\varepsilon  \cdot P)&= \delta \bigg[-\sum_{j=1}^{2m-2}\tilde{c}_j \nabla^j \varepsilon  \nabla^{2m-1-j}P 
	-(id+\varepsilon)\bigg( \sum_{i=1}^{2m-2}c_i\nabla^i P\nabla^{2m-2-i}\Omega
	\\
	&\qquad\qquad - dP\eta +PF+ d\Delta^{m-2}(*(d*\xi\wedge dP))\bigg)
	\\\nonumber
	&\qquad -\sum_{k=1}^{m-1} \Delta^k 	((id+\varepsilon)P )V_k 
	+\sum_{k=0}^{m-2} d\Delta^k	((id+\varepsilon)P )w_k \bigg]\qquad\text{in }B^{2m}_{1/2},
	\end{align}
	where $\tilde{c}_j$ are constants in $\N_0.$ Now that we have removed the "worst" terms we want to examine this equation further and take a closer look at the function spaces of the summands. 
	We separate the $\varepsilon$ component from the rest and use the embedding results for Lorentz-Sobolev spaces  in Lemma \ref{Lorentz Sobolev embedding} and Lemma \ref{Lorentz Sobolev prod} repeatedly. We use the notation $D^k A\star D^lB$ for any linear combination of $D^k A$ and $D^l B$ and $D$ denotes the full derivative. For the first term we have
	\begin{align*}
	\sum_{j=1}^{2m-2}D^j \varepsilon \star D^{2m-1-j}P 
	&= \sum_{j=1}^{2m-2}W^{m-j,2 }\cdot W^{-m+1+j,2},
	\end{align*}
	For the third and fourth term we get 
	\begin{align*}
	(id+\varepsilon)dP\eta&= L^\infty\cdot W^{m-1,2} \cdot W^{2-m, 2}\hookrightarrow L^\infty\cdot W^{2-m, \frac{2m}{m+1},1},
	\\
	(id+\varepsilon)PF&= L^\infty\cdot L^\infty \cdot W^{2-m, \frac{2m}{m+1},1}.
	\end{align*}
	The second term is of the from 
	\begin{align*}
	& (id+\varepsilon)\bigg(\sum_{j=1}^{2m-3}D^j \Omega\star D^{2m-2-j}P + \Omega\star D^{2m-2}P\bigg)
	\\
	&= \sum_{j=1}^{2m-3}L^\infty\cdot  W^{m-1-j,2}\cdot W^{-m+2+j,2} + L^\infty\cdot W^{m-1,2}\cdot W^{2-m,2}
	\\
	&\hookrightarrow \sum_{j=1}^{m-2} L^\infty\cdot W^{-m+2+j, \frac{2m}{m+1+j},1} + \sum_{j=m-1}^{2m-3} L^\infty \cdot W^{m-1-j, \frac{2m}{3m-2-j},1}
	\\
	&\qquad+L^\infty\cdot  W^{2-m,\frac{2m}{m+1},1}
	\\
	&\hookrightarrow  L^\infty\cdot  W^{2-m,\frac{2m}{m+1},1},
	\end{align*}
	where we used Lemma \ref{Lorentz Sobolev prod} in the first step and Lemma \ref{Lorentz Sobolev embedding} with $s= m-2-j,~ p= \frac{2m}{m+1+j},~t= j$ for $j=1,...,m-2$ and $s=-m+1+j,~p=\frac{2m}{3m-2-j},~t=2m -3-j$ for $j=m-1,...,2m-3$ in the second step. 
	The fifth term follows in the same way 
	\begin{align*}
	(id+\varepsilon)d\Delta^{m-2}\left(*(dP\wedge d*\xi)\right)&= (id+\varepsilon)\sum_{j=1}^{2m-2}D^j\xi\star D^{2m-1-j}P 
	\\
	&= \sum_{j=1}^{2m-2}L^\infty \cdot W^{m-j,2}\cdot W^{-m+1+j,2}
	\\
	&\hookrightarrow L^\infty \cdot   W^{2-m,\frac{2m}{m+1},1}.
	\end{align*}
	For the last two terms we apply again Lemma \ref{Lorentz Sobolev prod} and Lemma \ref{Lorentz Sobolev embedding} with $s= m-2k-1,~p=\frac{2m}{m+2k-j},~ t=2k-j$ for $2k+1-m<m-2k+j$ and $s=2k-j-m,~p=\frac{2m}{3m-2k-1},  ~t= 2m-2k-1$ for $m-2k+j \leq 2k+1-m.$ 
	\begin{align*}
	&\sum_{k=1}^{m-1} \Delta^k 	((id+\varepsilon)P )V_k = \sum_{k=1}^{m-1}\bigg(\sum_{j=1}^{2k-1}D^j \varepsilon \star D^{2k-j}P + (id+\varepsilon)\Delta^k P + \Delta^k \varepsilon P\bigg)V_k
	\\
	&= \sum_{k=1}^{m-1}\sum_{j=1}^{2k-1}W^{m-j,2}\cdot W^{m-2k+j,2} \cdot W^{2k+1-m,2}+\sum_{k=1}^{m-1} L^\infty\cdot W^{m-2k,2} \cdot W^{2k+1-m,2}
	\\
	&\hookrightarrow \sum_{\substack{j,k\in\N,  ~j\leq 2k-1 , ~k\leq m-1\\2k+1-m<m-2k+j}} W^{m-j,2}\cdot W^{2k+1-m,\frac{2m}{m+2k-j}} \\
	&\quad +\sum_{\substack{j,k\in\N,~ j\leq 2k-1,~k\leq m-1\\m-2k+j\leq 2k+1-m}} W^{m-j,2} \cdot W^{m-2k+j, \frac{2m}{3m-2k-1}}
	\\
	&\quad + \sum_{\substack{k\in\N,~k\leq m-1 \\ 2k+1-m<m-2k}}L^\infty\cdot W^{2k+1-m,\frac{2m}{m+2k}, 1} + \sum_{\substack{k\in\N,~k\leq m-1 \\ m-2k\leq 2k+1-m}}  L^\infty\cdot  W^{m-2k,\frac{2m}{3m-2k-1},1}
	\\
	&\hookrightarrow \sum_{j=1}^{2m-3}W^{m-j,2} \cdot W^{-m+1+j,2} + L^\infty\cdot W^{2-m, \frac{2m}{m+1},1}
	\end{align*}
	and analogously
	\begin{align*}
	&\sum_{k=0}^{m-2} \nabla \Delta^k	((id+\varepsilon)P )w_k 
	\\
	&= \sum_{k=0}^{m-2}\bigg( \sum_{j=1}^{2k} D^j \varepsilon\star D^{2k+1-j}P  +(id+\varepsilon)\delta \Delta^k P + \delta\Delta^k \varepsilon P\bigg) w_k
	\\
	&= \sum_{k=0}^{m-2} \sum_{j=1}^{2k} W^{m-j,2}\cdot W^{m-2k-1+j,2}\cdot W^{2k+2-m,2} + \sum_{k=0}^{m-2}L^\infty \cdot W^{m-2k+1}\cdot W^{2k+2-m,2}
	\\
	&\hookrightarrow \sum_{j=1}^{2m-3}W^{m-j,2} \cdot W^{-m+1+j,2} + L^\infty\cdot W^{2-m, \frac{2m}{m+1},1}.
	\end{align*}
	Observe that all terms on the right-hand side of (\ref{pde without V0}) consist of products $W^{m-j,2}\cdot W^{j+1-m,2}, ~j=1,...,2m-2$ and $L^\infty\cdot W^{2-m, \frac{2m}{m+1},1}$. Thus we can simplify (\ref{pde without V0}) further and write
	\begin{align}\label{pde mit Kj}
	\Delta(\Delta^{m-1} \varepsilon\cdot  P)&=\delta \bigg( \sum_{j=1}^{2m-2} D^j\varepsilon\star K_j + (id+\varepsilon)\star K_0 \bigg) 
	\end{align}
	with $K_j\in W^{j+1-m,2}(B^{2m}_{1/2}), ~K_0\in W^{2-m, \frac{2m}{m+1},1}(B^{2m}_{1/2})$. 
	Moreover with  (\ref{Uhlenbeck lemma abs}) and (\ref{sigma abs v w}) we estimate 
	\begin{align}\label{abs Ko Kj}
	||K_0||_{W^{2-m,\frac{2m}{m+1},1}(B^{2m}_{1/2})}+\sum_{j=1}^{2m-2}||K_j||_{W^{j+1-m,2}(B^{2m}_{1/2})}\leq c\sigma.
	\end{align}
	However the equation still contains distributions. To take care of these we apply the same technique as de Longueville and Gastel and use the representation of negative Lorentz-Sobolev spaces (see Lemma \ref{Lorentz Sobolev distr darstell}).
	\begin{align}\label{rep epsilon}\begin{split}
	\varepsilon&= \sum_{|\alpha|\leq m-2}\p^{\alpha} \varepsilon_\alpha, \qquad\qquad \varepsilon_\alpha\in W^{2m-1,\frac{2m}{2m-1-|\alpha|},1}(B^{2m}_{1/2}),
	\\
	K_0&= \sum_{|\alpha|\leq m-2}\p^{\alpha} K^\alpha_0,\qquad\quad K^\alpha_0\in L^{\frac{2m}{m+1},1}(B^{2m}_{1/2}),
	\\
	K_j&= \sum_{|\alpha|\leq m-1-j}\p^{\alpha} K^\alpha_j,\qquad K^\alpha_j\in L^2(B^{2m}_{1/2}).
	\end{split}
	\end{align}
	Together with (\ref{abs Ko Kj}) we get
	\begin{align}\label{abs Kj alpha K0 beta dis}\begin{split}
	\sum_{|\alpha|\leq m-1-j}||K_j^\alpha||_{L^2(B_{1/2}^{2m})} &\leq c ||K_j||_{W^{j+1-m,2}(B_{1/2}^{2m})}\leq c\sigma,
	\\
	\sum_{|\alpha|\leq m-2}||K_0^\alpha||_{L^{\frac{2m}{m+1},1}(B_{1/2}^{2m})} &\leq c ||K_0||_{W^{2-m,\frac{2m}{m+1},1}(B_{1/2}^{2m})}\leq c\sigma.
	\end{split}
	\end{align}
	Note that we assume  $\varepsilon\in W^{m+1,\frac{2m}{m+1},1}$ for this representation, which is slightly better than the original assumption $\varepsilon\in W^{m,2}\cap L^\infty$. We will see that we can solve (\ref{pde without V0}) in this better space and since $W^{m+1, \frac{2m}{m+1}, 1}(B^{2m})\hookrightarrow W^{m,2}\cap L^\infty(B^{2m})$ we get the desired result.
	\\\\
	This new representation allows us to shift derivatives away from the distributional part. Let $c_{\alpha\gamma}, ~c_{\beta\gamma}\in \Z$. With the product rule we get for $j=1,...,m-2$ 
	\begin{align*}
	D^j\varepsilon \star K_j=\sum_{\substack{|\alpha|\leq m-2\\ |\beta|\leq m-1-j}} D^j\p^\alpha\varepsilon_\alpha\star\p^\beta K_j^\beta = \sum_{\substack{|\alpha|\leq m-2\\ |\beta|\leq m-1-j}}\sum_{\gamma\leq \beta}\p^\gamma \big(c_{\beta\gamma}\p^{\beta-\gamma}\p^\alpha D^j \varepsilon_\alpha\star K^\beta_j \big)
	\end{align*}
	The case $j=0$ follows analogously 
	\begin{align*}
	(id +\varepsilon)\star K_0&=\sum_{|\gamma|\leq m-2}\p^\gamma K^{\gamma}_0 + \sum_{\substack{|\alpha|\leq m-2\\ |\beta|\leq m-2}}\p^\alpha\varepsilon_\alpha\star \p^\beta K^{\beta}_0
	\\
	&= \sum_{|\gamma|\leq m-2}\p^\gamma K^{\gamma}_0 +\sum_{\substack{|\alpha|\leq m-2\\ |\beta|\leq m-2}} \sum_{\gamma\leq \beta}\p^\gamma \big(c_{\beta\gamma}\p^{\beta-\gamma}\p^\alpha\varepsilon_\alpha\star  K^{\beta}_0\big).
	\end{align*}
	For $j=m-1,...,2m-2$ with $|\alpha|\leq j+1-m$ we get
	\begin{align*}
	D^j\varepsilon\star K_j=\sum_{\substack{|\alpha|\leq m-2}} D^j\p^\alpha\varepsilon_\alpha\star K_j = \sum_{\substack{|\alpha|\leq m-2}}\sum_{\gamma\leq \alpha}\p^\gamma \big(c_{\alpha\gamma}D^j \varepsilon_\alpha\star\p^{\alpha-\gamma} K_j \big).
	\end{align*}	
	If $|\alpha|>j+1-m$ we choose $\beta\leq \alpha$ with $|\beta|=j+1-m$ and 
	\begin{align*}
	D^j\varepsilon \star K_j&=\sum_{\substack{|\alpha|\leq m-2}} D^j\p^\alpha\varepsilon_\alpha\star K_j
	\\
	&= \sum_{\substack{|\alpha|\leq m-2}}\sum_{\substack{\gamma\leq \beta\\|\beta|=j+1-m}}\p^\gamma \big(c_{\beta\gamma} \p^{\alpha-\beta}D^j\varepsilon_\alpha\star\p^{\beta-\gamma} K_j \big).
	\end{align*}
	We rewrite the left-hand side of (\ref{pde mit Kj}) in the same way.
	\begin{align*}
	\Delta (\Delta^{m-1}&\varepsilon\cdot  P)= \sum_{|\alpha|\leq m-2} \Delta(\Delta^{m-1}\p^\alpha\varepsilon_\alpha \cdot P)
	\\
	&=\sum_{|\alpha|\leq m-2}\sum_{\gamma\leq \alpha}\p^\gamma \Delta(c_{\alpha\gamma}\Delta^{m-1}\varepsilon_\alpha\p^{\alpha-\gamma} P)
	\\
	&=\sum_{|\gamma|\leq m-2}\p^{\gamma }\Delta(\Delta^{m-1}\varepsilon_\gamma\cdot P) +\sum_{|\alpha|\leq m-2}\sum_{\gamma< \alpha}\p^\gamma \Delta(c_{\alpha\gamma}\Delta^{m-1}\varepsilon_\alpha\p^{\alpha-\gamma} P).
	\end{align*}
	For the last term note that $P\in W^{m,2}(B^{2m}_{1/2}, SO(n))$. Thus we identify $P$ with $K_{2m-1}$ and write
	\begin{align*}
	&\sum_{|\alpha|\leq m-2}\sum_{\gamma< \alpha}\p^\gamma \Delta(c_{\alpha\gamma}\Delta^{m-1}\varepsilon_\alpha\p^{\alpha-\gamma} P)
	\\
	&= \delta\bigg[ \sum_{|\alpha|\leq m-2}\sum_{\gamma< \alpha}\sum_{i=0}^{1}\p^\gamma \left( c_{\alpha\gamma} D^{2m-2-i}\varepsilon_\alpha\star\p^{\alpha-\gamma}D^{1-i}K_{2m-1} \right)\bigg].
	\end{align*}
	Putting all of this together we get an equation equivalent to (\ref{epsilon dgl})
	\begin{align*}
		\sum_{|\gamma|\leq m-2}\p^{\gamma }&\Delta(\Delta^{m-1}\varepsilon_\gamma \cdot P)
	\\
	&=  \delta\bigg[\sum_{|\gamma|\leq m-2}\p^\gamma K^{\gamma}_0 +\sum_{\substack{|\alpha|\leq m-2\\ |\beta|\leq m-2}} \sum_{\gamma\leq \beta}\p^\gamma \big(c_{\beta\gamma}\p^{\beta-\gamma}\p^\alpha\varepsilon_\alpha\star  K^{\beta}_0\big)
	\\
	&\qquad + \sum_{j=1}^{m-2} \sum_{\substack{|\alpha|\leq m-2\\ |\beta|\leq m-1-j}}\sum_{\gamma\leq \beta}\p^\gamma \big(c_{\beta\gamma}\p^{\beta-\gamma}\p^\alpha D^j \varepsilon_\alpha\star K^\beta_j \big)
	\\
	&\qquad +\sum_{\substack{j=m-1 \\ |\alpha|\leq j+1-m}}^{2m-2}\sum_{\substack{|\alpha|\leq m-2}}\sum_{\gamma\leq \alpha}\p^\gamma \big(c_{\alpha\gamma}D^j \varepsilon_\alpha\star\p^{\alpha-\gamma} K_j \big)
	\\
	&\qquad +\sum_{\substack{j=m-1 \\ |\alpha|>j+1-m}}^{2m-2}\sum_{\substack{|\alpha|\leq m-2}}\sum_{\substack{\gamma\leq \beta\\|\beta|= j+1-m}}\p^\gamma \big(c_{\beta\gamma} \p^{\alpha-\beta}D^j\varepsilon_\alpha\star\p^{\beta-\gamma} K_j \big)
	\\
	&\qquad+
	\sum_{i=0}^{1} \sum_{|\alpha|\leq m-2}\sum_{\gamma< \alpha}\p^\gamma \left(c_{\alpha\gamma} D^{2m-2-i}\varepsilon_\alpha\star\p^{\alpha-\gamma}D^{1-i}K_{2m-1} \right)
	\bigg].
	\end{align*}
	We simplify this further by setting
	\begin{align}\label{pde e gamma}
	\sum_{|\gamma|\leq m-2}\p^{\gamma }\Delta(\Delta^{m-1}\varepsilon_\gamma \cdot P)
	=: \delta\bigg[ \sum_{|\gamma|\leq m-2}\p^{\gamma}\left( \langle\varepsilon, K\rangle_\gamma +K_0^\gamma\right) \bigg]
	\end{align}
	with 
	\begin{align}\label{K0 e,K abs}\begin{split}
	||K_0^\gamma||_{L^{\frac{2m}{2m-1-|\gamma|},1}(B^{2m}_{1/2})}&+||\langle \varepsilon, K\rangle_\gamma||_{L^{\frac{2m}{2m-1-|\gamma|},1}(B^{2m}_{1/2})}
	\\
	&\leq c \sigma \left(\sum_{|\alpha|\leq m-2}||\varepsilon_\alpha||_{W^{2m-1, \frac{2m}{2m-1-|\alpha|},1}(B^{2m}_{1/2})} +1 \right)
	\end{split}
	\end{align}
	for every $\gamma$ with $|\gamma|\leq m-2$. 
	To see this last inequality we use (\ref{abs Kj alpha K0 beta dis}) and estimate each term separately
	\begin{align*}
	||K_0^\gamma||_{L^{\frac{2m}{2m-1-|\gamma|},1}(B^{2m}_{1/2})}
	\leq c ||K_0^\gamma||_{L^{\frac{2m}{m+1},1}(B^{2m}_{1/2})}\leq c\sigma;
	\end{align*}
	since $K_0^\gamma \in L^{\frac{2m}{m+1},1}$ and $ L^{\frac{2m}{m+1},1}\hookrightarrow L^{\frac{2m}{2m-1-|\gamma|},1}(B^{2m}_{1/2}) $ by Lemma \ref{embedding lorentz exponent}. Further we have
	\begin{align*}
	W^{2m-1-|\beta|+|\gamma|-|\alpha|-j,\frac{2m}{2m-1-|\alpha|},1}\hookrightarrow L^{\frac{2m}{j+|\beta|-|\gamma|},1} \hookrightarrow L^{\frac{2m}{m-1-|\gamma|},1}(B^{2m}_{1/2})
	\end{align*}
	by Lemma \ref{pos. LorentzSobolev Embedding} and Lemma \ref{embedding lorentz exponent} since  $|\beta|\leq m-j-1$. With Lemma \ref{Holder for Lorentz} and \ref{embedding lorentz exponent} we have  $L^{\frac{2m}{m-1-|\gamma|},1}\cdot L^2\hookrightarrow L^{\frac{2m}{2m-1-|\gamma|},1}$ and since $\gamma \leq \beta$
	\begin{align*}
	&||\p^{\beta-\gamma}\p^\alpha D^j \varepsilon_\alpha\star K^\beta_j||_{L^{\frac{2m}{2m-1-|\gamma|},1}(B^{2m}_{1/2})}
	\\
	& \leq c ||\varepsilon_\alpha||_{W^{2m-1-j-|\alpha|-|\beta|+|\gamma|, 	\frac{2m}{2m-1-|\alpha|},1}(B^{2m}_{1/2})}||K_j^\beta||_{L^2(B^{2m}_{1/2})}
	\\
	&\leq c\sigma ||\varepsilon_\alpha||_{W^{2m-1,\frac{2m}{2m-1-|\alpha|},1}(B^{2m}_{1/2})}.
	\end{align*}
	The remaining terms follow in a similar way. With Lemma \ref{pos. LorentzSobolev Embedding}
	\begin{align*}
	W^{2m-1-|\beta|+|\gamma|-|\alpha|, \frac{2m}{2m-1-|\alpha|},1}\hookrightarrow L^{\frac{2m}{|\beta|-|\gamma|},1}(B^{2m}_{1/2})
	\end{align*}
	and by Lemma \ref{Holder for Lorentz} and Lemma \ref{embedding lorentz exponent} with $|\beta|\leq m-2$
	\begin{align*}
	L^{\frac{2m}{|\beta|-|\gamma|},1}\cdot L^{\frac{2m}{m+1},1}  \hookrightarrow L^{\frac{2m}{m+|\beta|-|\gamma|+1},1}\hookrightarrow L^{\frac{2m}{2m-1-|\gamma|},1} (B^{2m}_{1/2}).
	\end{align*}
	With this and $\gamma \leq \beta $
	\begin{align*}
	&|| \p^{\beta-\gamma}\p^\alpha\varepsilon_\alpha\star K^{\beta}_0||_{L^{\frac{2m}{2m-1-|\gamma|},1}}
	\\
	& \leq 
	c ||\varepsilon_\alpha||_{W^{2m-1-|\alpha|-|\beta|+|\gamma|, \frac{2m}{2m-1-|\alpha|},1}}||K_0^\beta||_{L^2}
	\leq c\sigma  ||\varepsilon_\alpha||_{W^{2m-1,\frac{2m}{2m-1-|\alpha|},1}}.
	\end{align*}
	For the next term we have with Lemma \ref{pos. LorentzSobolev Embedding} and Lemma \ref{Holder for Lorentz}
	\begin{align*}
	W^{2m-1-j, \frac{2m}{2m-1-|\alpha|},1 }\cdot W^{j+1-m-|\alpha|+|\gamma|,2}
	&\hookrightarrow L^{\frac{2m}{j-|\alpha|},1}\cdot L^{\frac{2m}{2m+|\alpha|-|\gamma|-j-1},2}
	\\
	&\hookrightarrow L^{\frac{2m}{2m-1-|\gamma|},1}(B^{2m}_{1/2})
	\end{align*}
	so that with $\gamma \leq \alpha$
	\begin{align*}
	&||D^j \varepsilon_\alpha\star\p^{\alpha-\gamma} K_j ||_{L^{\frac{2m}{2m-1-|\gamma|},1}}
	\\
	&\leq c ||\varepsilon_\alpha||_{W^{2m-1-j, \frac{2m}{2m-1-|\alpha|},1}}||K_j||_{W^{j+1-m+|\gamma|-|\alpha|,2}}
	\leq c\sigma  ||\varepsilon_\alpha||_{W^{2m-1,\frac{2m}{2m-1-|\alpha|},1}}.
	\end{align*}
	In the fifth term we use $|\beta|= j+1-m$, Lemma \ref{pos. LorentzSobolev Embedding} and Lemma \ref{Holder for Lorentz} to get
	\begin{align*}
	W^{2m-1-|\alpha|+|\beta|-j, \frac{2m}{2m-1-|\alpha|},1}\cdot W^{j+1-m-|\beta|+|\gamma|,2}&\hookrightarrow L^{\frac{2m}{m-1},1} \cdot L^{\frac{2m}{m-|\gamma|},2}
	 \\
	 &\hookrightarrow L^{\frac{2m}{2m-1-|\gamma|},1}(B^{2m}_{1/2})
	\end{align*}
	and
	\begin{align*}
	&||\p^{\alpha-\beta}D^j\varepsilon_\alpha\star\p^{\beta-\gamma} K_j||_{L^{\frac{2m}{2m-1-|\gamma|},1}}
	\\
	&\leq c ||\varepsilon_\alpha||_{W^{2m-1-j-|\alpha|+|\beta|, \frac{2m}{2m-1-|\alpha|},1}}||K_j||_{W^{j+1-m+|\gamma|-|\beta|,2}}
	\\
	&\leq c\sigma  ||\varepsilon_\alpha||_{W^{2m-1,\frac{2m}{2m-1-|\alpha|},1}}.
	\end{align*}
	Finally we estimate for $i=0,1$ with (\ref{Uhlenbeck lemma abs}) and $\gamma \leq \alpha$
	\begin{align*}
	&||D^{2m-2-i}\varepsilon_\alpha\star\p^{\alpha-\gamma}D^{1-i}K_{2m-1}||_{L^{\frac{2m}{2m-1-|\gamma|},1}}
	\\
	&\qquad\leq ||\varepsilon_\alpha||_{W^{1+i,\frac{2m}{2m-1-|\alpha|},1}} ||P||_{W^{m-|\alpha|+|\gamma|-1+i,2}}
	\leq c\sigma  ||\varepsilon_\alpha||_{W^{2m-1,\frac{2m}{2m-1-|\alpha|},1}}
	\end{align*}	
	and this proves (\ref{K0 e,K abs}). 
\subsection{The fixed point argument}
	Instead of solving (\ref{pde e gamma}) we solve the system
	\begin{align}\label{pde fixed point eps}
	\Delta(\Delta^{m-1}\varepsilon_\gamma\cdot P)=  \delta\left( \langle\varepsilon, K\rangle_\gamma +K_0^\gamma\right)\qquad\text{for every $\gamma$ with $|\gamma|\leq m-2$.}
	\end{align}
	To do this we apply a fixed point argument: Let  $X_\gamma:=\{ u\in M(n): || u||_{W^{2m-1,\frac{2m}{2m-1-|\gamma|},1}}(B^{2m}_{1/2})<\infty \} $ and $X=\oplus_{|\gamma|\leq m-2}X_\gamma$. We define maps $\psi_\gamma : X_\gamma\rightarrow X_\gamma$  by 
	\begin{align*}
	\psi_\gamma :\varepsilon_\gamma\mapsto \text{ solution $\lambda_\gamma$ of (\ref{fix pt equ})}
	\end{align*}
	with
	\begin{align}\label{fix pt equ}
	\begin{cases}
	\Delta(\Delta^{m-1}\lambda_\gamma \cdot P)&= \delta \left( \langle\varepsilon, K\rangle_\gamma +K_0^\gamma\right)\qquad\text{in }B_{1/2}^{2m},
	\\
	\qquad\qquad\Delta^{j} \lambda_\gamma&=0\qquad\qquad\qquad\qquad\quad\text{on }\p B_{1/2}^{2m}\text{ for }j=0,...,m-1.
	\end{cases}
	\end{align} 
	Let $\hat{\lambda}=\sum_{|\gamma|\leq m-2} \lambda_\gamma$ and $\hat{\varepsilon}=\sum_{|\gamma|\leq m-2}\varepsilon_\gamma$, where $\lambda_\gamma$ is a solution of (\ref{fix pt equ}) for every $\gamma$ with corresponding $\varepsilon_\gamma$. Let $\Psi= \oplus_{|\gamma|\leq m-2}\psi_\gamma$ and
	\begin{align*}
	\mu:= ||\hat{\varepsilon}||_X:= \sum_{|\gamma|\leq m-2}||D^{2m-1} \varepsilon_\gamma||_{L^{\frac{2m}{2m-1-|\gamma|},1}(B_{1/2}^{2m})}.
	\end{align*}
	We apply Lemma \ref{Wente type lemma} and (\ref{K0 e,K abs}) to estimate
	\begin{align*}
	||D^{2m-1}\lambda_\gamma||_{L^{\frac{2m}{2m-1-|\gamma|},1}(B_{1/2}^{2m})} 
	&\leq c ||\langle \varepsilon, K\rangle_\gamma + K_0^\gamma||_{L^{\frac{2m}{2m-1-|\gamma|},1}(B_{1/2}^{2m})}
	\\
	&\leq c\sigma \left(\sum_{|\gamma|\leq m-2}||\varepsilon_\gamma||_{W^{2m-1, \frac{2m}{2m-1-|\gamma|},1}(B_{1/2}^{2m})} +1 \right)
	\\
	&\leq c_1 \sigma(\mu+1).
	\end{align*}
	We choose $\sigma < \frac{\mu}{2c_1(\mu+1)}$ to get
	\begin{align*}
	||\hat{\lambda}||_X\leq \frac{\mu}{2}.
	\end{align*}
	Next we show that $\psi_\gamma$ is a contraction. Let $\lambda_\gamma^1,~\lambda_\gamma^2$ be solutions of (\ref{fix pt equ}) with $\varepsilon_\gamma^1,~\varepsilon_\gamma^2$ respectively. Then $\Lambda_\gamma:= \lambda_\gamma^1-\lambda_\gamma^2$ is a solution of 
	\begin{align*}
	\begin{cases}
	\Delta(\Delta^{m-1}\Lambda_\gamma\cdot P)&= \delta \left( \langle\varepsilon^1 -\varepsilon^2, K\rangle_\gamma\right)\qquad\text{in }B_{1/2}^{2m},
	\\
	\qquad\qquad\Delta^{j} \Lambda_\gamma&=0\qquad\qquad\qquad\qquad\quad\text{on }\p B_{1/2}^{2m}\text{ for }j=0,...,m-1.
	\end{cases}
	\end{align*}	
	Applying Lemma \ref{Wente type lemma} and (\ref{K0 e,K abs}) again yields
	\begin{align*}
	||D^{2m-1}\lambda_\gamma^1 &-D^{2m-1}\lambda_\gamma^2||_{L^{\frac{2m}{2m-1-|\gamma|},1 }(B_{1/2}^{2m})}
	\\
	&\leq c\sigma\sum_{|\gamma|\leq m-2} ||\varepsilon_\gamma^1-\varepsilon_\gamma^2||_{W^{2m-1,\frac{2m}{2m-1-|\gamma|},1}(B_{1/2}^{2m})}.
	\end{align*}
	With this we have 
	\begin{align*}
	||\hat{\lambda}^1-\hat{\lambda}^2||_X\leq c_2\sigma ||\hat{\varepsilon}^1-\hat{\varepsilon}^2||_X.
	\end{align*}
	Choosing $\sigma<\min\{\frac{\mu}{2c_1(\mu+1)},\frac{1}{2c_2}\}$ shows that $\Psi$ is a contraction.
	Now we can apply the Banach fixed point theorem which yields a unique $\hat{\varepsilon}^*\in X$ solving (\ref{pde fixed point eps}) and by Lemma \ref{Wente type lemma} and (\ref{K0 e,K abs}) 
	\begin{align*}
	\sum_{|\gamma|\leq m-2} ||\varepsilon^*_\gamma||_{W^{2m-1,\frac{2m}{2m-1-|\gamma|},1}(B^{2m}_{1/2})}\leq c\sigma.
	\end{align*}
	Thus we have 
	\begin{align}\label{0=div(...) abkuerzung}
	0&=\delta \left( d\Delta^{m-1}\varepsilon^*_\gamma \cdot P - \langle \varepsilon^*, K\rangle_\gamma + K^\gamma_0\right)
	\end{align}
	for every $\gamma$ with $|\gamma|\leq m-2$. What is left to show is that these $\varepsilon^*_\gamma$ are the Sobolev functions in the representation  (\ref{rep epsilon}) of $\varepsilon$ and this $\varepsilon$ solves (\ref{epsilon dgl}).  
\subsection{Going back to the original system}
	In order to go back to our original system, we reverse the abbreviations we made at the beginning to get a detailed look at (\ref{0=div(...) abkuerzung}). To do this we go back to (\ref{pde without V0}). As we have seen before, each term of this equation is a product of a distribution and a Sobolev function. More precisely, the terms are of the form $L^\infty\cdot W^{2-m,\frac{2m}{m-1}}$ and $W^{m-k,2}\cdot W^{-m+1+k,2},~ k=1,...,2m-2$.  We use the following representations for the distributions according to Lemma \ref{Lorentz Sobolev distr darstell}
	\begin{align*}
	FP -d\Delta^{m-2}\delta &(\Omega P) + d\Delta^{m-2}\delta \Omega P - d \Delta^{m-2}(* (dP\wedge d* \xi))
	\\
	= \sum_{|\alpha|\leq m-2}  &\bigg( FP -d\Delta^{m-2}\delta (\Omega P) + d\Delta^{m-2}\delta \Omega P - d \Delta^{m-2}(* (dP\wedge d* \xi))\bigg)^\alpha, 
	\\
	\bigg( FP -d\Delta^{m-2}&\delta (\Omega P) + d\Delta^{m-2}\delta \Omega P - d \Delta^{m-2}(* (dP\wedge d* \xi))\bigg)^\alpha\in L^{\frac{2m}{m+1},1}(B^{2m}_{1/2})
	\\[8pt]
	\Delta^k P\cdot V_k&=\sum_{|\alpha|\leq m-2} \p^\alpha (\Delta^k P V_k)^\alpha,\qquad(\Delta^k PV_k)^\alpha\in L^{\frac{2m}{m+1},1}(B^{2m}_{1/2}),~k\neq 0
	\\
	d\Delta^kPw_k &=\sum_{|\alpha|\leq m-2} \p^\alpha (d\Delta^k P w_k)^\alpha,\qquad(d\Delta^k Pw_k)^\alpha\in L^{\frac{2m}{m+1},1}(B^{2m}_{1/2})
	\\
	\nabla^{2k-l}P\cdot V_k&=\sum_{|\alpha|\leq m-1-l} \p^\alpha (\nabla^{2k-l}PV_k)^\alpha, \quad(\nabla^{2k-l}PV_k)^\alpha \in L^{2}(B^{2m}_{1/2}),~k\neq 0  
	\\
	\nabla^{2k+1-l} P\cdot w_k&= \sum_{|\alpha|\leq m-1-l} \p^\alpha (\nabla^{2k+1-l}Pw_k)^\alpha ,\quad (\nabla^{2k+1-l}Pw_k)^\alpha \in L^{2}(B^{2m}_{1/2}),
	\\
	\nabla^{2m-1-k}P&= \sum_{|\alpha|\leq m-1-k} \p^\alpha (\nabla^{2m-1-k}P)^\alpha, \quad\qquad ~(\nabla^{2m-1-k}P)^\alpha\in L^2(B^{2m}_{1/2}).
	\end{align*}
	Then we shift derivatives to get an equation of the form 
	$\sum_{|\gamma|\leq m-2}\p^\gamma(...)_\gamma=0$
	as in (\ref{pde e gamma}). 
	Using this we see that  (\ref{0=div(...) abkuerzung}) is equivalent to 
	\begin{align*}
	&0=\delta \bigg[ \sum_{\substack{1\leq k\leq m-2\\|\alpha|\leq m-1-k}}c_{k,\alpha\gamma} \p^{\alpha-\gamma}\nabla^k\p^\beta\varepsilon^*_\beta(\nabla^{2m-1-k}P)^\alpha
	\\
	&\qquad + \sum_{\substack{m-1\leq k\leq 2m-1\\|\alpha|\leq k+1-m}}c_{k,\alpha\gamma} \nabla^k\varepsilon^*_\alpha \p^{\alpha-\gamma}\nabla^{2m-1-k}P
	\\
	&\qquad +\sum_{\substack{m-1\leq k\leq 2m-1\\|\alpha|>m-1-k\\|\beta|=m-1-k}}c_{k,\beta\gamma} \p^{\alpha-\beta}\nabla^k\varepsilon^*_\alpha \p^{\beta-\gamma}\nabla^{2m-1-k}P
	\\
	&\qquad + \bigg( FP -d\Delta^{m-2}\delta (\Omega P) + d\Delta^{m-2}\delta \Omega P - d \Delta^{m-2}(* (dP\wedge d* \xi))\bigg)^\gamma 
	\\
	&\qquad + \sum_{|\alpha|,|\beta|\leq m-2} c_{\beta \gamma} \p^{\beta-\gamma}\p^\alpha \varepsilon^*_\alpha \bigg( FP -d\Delta^{m-2}\delta (\Omega P) + d\Delta^{m-2}\delta \Omega P 
	\\
	&\qquad\qquad\qquad- d \Delta^{m-2}(* (dP\wedge d* \xi))\bigg)^\beta
	\\
	&\qquad+\sum_{k=1}^{m-1}( \Delta ^kP V_k)^\gamma 
	+ \sum_{k=0}^{m-1} \sum_{\substack{|\alpha|,|\beta|\leq m-2}} c_{\beta\gamma} \p^{\beta-\gamma}\p^\alpha\varepsilon^*_\alpha (\Delta^k PV_k)^\beta
	\\
	&\qquad +\sum_{k=1}^{m-1} \sum_{\substack{1\leq l\leq m-2\\l\leq 2k\\|\alpha|\leq l+1-m}} c_{l, \alpha\gamma} \p^{\alpha-\gamma} \nabla^l\p^\beta \varepsilon^*_\beta (\nabla^{2k-l}P V_k)^\alpha
	\\
	&\qquad +\sum_{k=1}^{m-1} \sum_{\substack{m-1\leq l\leq 2m-2\\ l\leq 2k\\ |\alpha|\leq l+1-m}} c_{l,\alpha\gamma} \nabla^{l}\varepsilon^*_\alpha \p^{\alpha-\gamma}\nabla^{2k-l}P V_k
	\\
	&\qquad +\sum_{k=1}^{m-1}  \sum_{\substack{m-1\leq l\leq 2m-2\\ l\leq 2k\\|\alpha|>l+1-m\\|\beta|=l+1-m}} c_{l,\beta\gamma} \p^{\alpha-\beta}\nabla^l \varepsilon^*_\alpha \p^{\beta-\gamma}\nabla^{2k-l}P V_k
	\\
	&\qquad -\sum_{k=0}^{m-2}(d\Delta ^kP w_k)^\gamma 
	-\sum_{k=0}^{m-2} \sum_{\substack{|\alpha|,|\beta|\leq m-2}} c_{\beta\gamma} \p^{\beta-\gamma}\p^\alpha\varepsilon^*_\alpha (d\Delta^k Pw_k)^\beta
	\\
	&\qquad - \sum_{k=0}^{m-2} \sum_{\substack{1\leq l\leq m-2\\l\leq 2k+1\\|\alpha|\leq l+1-m}} c_{l, \alpha\gamma} \p^{\alpha-\gamma} \nabla^l\p^\beta \varepsilon^*_\beta (\nabla^{2k+1-l}P w_k)^\alpha
	\\
	&\qquad -\sum_{k=0}^{m-2} \sum_{\substack{m-1\leq l\leq 2m-3\\ l\leq 2k+1\\ |\alpha|\leq l+1-m}} c_{l,\alpha\gamma} \nabla^{l}\varepsilon^*_\alpha \p^{\alpha-\gamma}\nabla^{2k+1-l}P w_k
	\\
	&\qquad -\sum_{k=0}^{m-2}  \sum_{\substack{m-1\leq l\leq 2m-3\\ l\leq 2k+1\\|\alpha|>l+1-m\\|\beta|=l+1-m}} c_{l,\beta\gamma} \p^{\alpha-\beta}\nabla^l \varepsilon^*_\alpha \p^{\beta-\gamma} \nabla^{2k+1-l}P w_k
	\bigg]
	\\
	&=:\delta [...]_\gamma.
	\end{align*}
	By the Poincar\'e Lemma (see Lemma 10.68 in \cite{GiaMarti})  there exist  $B_\gamma\in W^{1,\frac{2m}{2m-2-|\gamma|}}_{\loc}(B_{1/2}^{2m}, \R^{n\times n }\otimes \wedge^2 \R^{2m}) $ for $|\gamma|\leq m-2$ such that 
	\begin{align*}
	\delta B_\gamma &= [...]_\gamma
	\end{align*}
	Now we transform  $\hat{\varepsilon}^*=\sum_{|\gamma|\leq m-2}\varepsilon_\gamma^*$ and $\hat{B}=\sum_{|\gamma|\leq m-2}B_\gamma$ back. Then we have $\varepsilon\in W^{m+1,\frac{2m}{m-1},1}(B_{1/2}^{2m}, M(n))$ with
	\begin{align*}
	||\varepsilon||_{W^{m+1,\frac{2m}{m-1},1}(B^{2m}_{1/2})} + ||\varepsilon||_{L^\infty(B^{2m}_{1/2})}\leq c\sigma
	\end{align*}
	and
	\begin{align*}
	\varepsilon=\sum_{|\gamma|\leq m-2}\p^{\gamma}\varepsilon^*_\gamma\qquad\text{solves (\ref{epsilon dgl})}.
	\end{align*} 
	Further $B=\sum_{|\gamma|\leq m-2}\p^\gamma B_\gamma \in W^{2-m,2}_{\loc}(B_{1/2}^{2m}, \R^{n\times n }\otimes \wedge^2 \R^{2m}) $ with
	\begin{align*}
	\delta B &= \sum_{k=0}^{m-1} \Delta^k 	((id+\varepsilon)P )V_k -\sum_{k=0}^{m-2} d\Delta^k	((id+\varepsilon)P )w_k  + d\Delta^{m-1}	((id+\varepsilon)P )
	\end{align*}
	and 
	\begin{align*}
	\delta&\bigg[ \sum_{l=0}^{m-1}\Delta^l ((id+\varepsilon)P) \Delta^{m-l-1} du - \sum_{l=0}^{m-2} d\Delta^l((id+\varepsilon)P) \Delta^{m-l-1} u
	\\\nonumber
	&\qquad -\sum_{k=0}^{m-1}\sum_{l=0}^{k-1} \Delta^l ((id+\varepsilon)P )\Delta^{k-l-1} d\langle V_k, du\rangle
	\\\nonumber
	&\qquad +\sum_{k=0}^{m-1}\sum_{l=0}^{k-1} d\Delta^l ((id+\varepsilon)P)\Delta^{k-l-1} \langle V_k, du\rangle
	\\\nonumber
	&\qquad - \sum_{k=0}^{m-2}\sum_{l=0}^{k} \Delta^l ((id+\varepsilon)P) d\Delta^{k-l-1}\delta (w_k du) 
	\\
	&\qquad+  \sum_{k=0}^{m-2}\sum_{l=0}^{k-1} d\Delta^l ((id+\varepsilon)P ) \Delta^{k-l-1}\delta (w_k du)  -\langle B, d u\rangle \bigg]=0.
	\end{align*}
\subsection{Regularity}
	To show $ (iii) $ we  abbreviate the conservation law (\ref{conservation law}) 
	\begin{align}\label{conslaw continous proof}
	\Delta\left((id+\varepsilon)P\Delta^{m-1}u\right) +\delta C =0\qquad\text{on }B^{2m}_{1/2},
	\end{align}
	where $C\in W^{2-m,\frac{2m}{m+1},1}(B_{1/2}^{2m})$. Since $\varepsilon\in W^{m+1, \frac{2m}{m-1},1}\cap L^\infty(B^{2m}_{1/2}), ~ P\in W^{m,2}\cap L^\infty(B^{2m}_{1/2})$ and $\Delta^{m-1}u_r \in W^{2-m,2}(B^{2m}_{1/2})$ we have 
	\begin{align}\label{harm eqn cont}
	(id+\varepsilon)P\Delta^{m-1}u\in  W^{2-m,2}(B^{2m}_{1/2}).
	\end{align}
	Set $f= (id+\varepsilon)P\Delta^{m-1}u$. Then
	\begin{align*}
	-\Delta f=\delta C \qquad\text{on }B_{1/2}^{2m}.
	\end{align*}	
	By Theorem 6.2 in \cite{LongDiss} we get $f\in W^{3-m, \frac{2m}{m+1},1}(B_{\lambda})$ on a smaller ball with radius $0<\lambda<1/2$. Since $(id+\varepsilon)P$ is invertible we rewrite (\ref{harm eqn cont}) 
	\begin{align*}
	\Delta^{m-1}u= \left[ (id+\varepsilon)P\right]^{-1}f
	\end{align*}
	and $\Delta^{m-1}u\in W^{3-m,\frac{2m}{m+1},1}(B^{2m}_{\lambda})$. But this means $u\in W^{m+1,\frac{2m}{m+1},1}(B^{2m}_\lambda)$ and $ W^{m+1,\frac{2m}{m+1},1}(B^{2m}_{\lambda}) \hookrightarrow C^0(B^{2m}_{\lambda})$  (see Theorem 2.3 in \cite{LongGas}). 

	Up until now we have assumed that $\sigma $ is arbitrarily small so that it satisfies the assumptions of Theorem \ref{Uhlenbeck thm} and the fixed point argument. A priori this is not true for components $V_k,w_k$ of a  system of the form (\ref{m poly system}). However any solution $u$ is continuous. To see this we rescale $u$ (see \cite{LongDiss} for a detailed proof). Let $x_0\in B^{2m}$ and $r>0$ small enough so that $u_r:B^{2m} \rightarrow \R^n,~u_r(x):=u(x_0+rx)$ is a solution of (\ref{m poly system}) on $B^{2m}$ with corresponding rescaled components $V_{k,r}$ and $w_{k,r}$,
	\begin{align*}
	\sigma_r &:= \sum_{k=0}^{m-2} ||w_{k,r}||_{W^{2k+2-m, 2}(B^{2m})} +\sum_{k=1}^{m-1}||V_{k,r}||_{W^{2k+1-m, 2}(B^{2m})} 
	\\
	&\qquad+ ||\eta_r||_{W^{2-m,2}(B^{2m})} + ||F_r||_{W^{2-m, \frac{2m}{m+1},1}(B^{2m})},
	\end{align*}
	$\sigma_r<\sigma_0$ and $B^{2m}_r(x_0)\subset B^{2m}$. By the above we have $u_r\in C^{0}(B^{2m}_{\lambda })$ which is the same as $u\in C^0(B^{2m}_{r\lambda}(x_0))$. A simple covering argument yields $u\in C^0(B^{2m})$.

\qed



\begin{thebibliography}{10}

\bibitem{AngelsbergPumberger09}
Gilles Angelsberg and David Pumberger.
\newblock A regularity result for polyharmonic maps with higher integrability.
\newblock {\em Ann. Global Anal. Geom.}, 35(1):63--81, 2009.

\bibitem{BethuelGhidaglia93}
F.~Bethuel and J.-M. Ghidaglia.
\newblock Improved regularity of solutions to elliptic equations involving
  {J}acobians and applications.
\newblock {\em J. Math. Pures Appl. (9)}, 72(5):441--474, 1993.

\bibitem{ChangWangYang}
Sun-Yung~A. Chang, Lihe Wang, and Paul~C. Yang.
\newblock A regularity theory of biharmonic maps.
\newblock {\em Comm. Pure Appl. Math.}, 52(9):1113--1137, 1999.

\bibitem{LongDiss}
Frédéric~Louis de~Longueville.
\newblock Regularit\"at der {L}\"osungen von {S}ystemen (2m)-ter {O}rdnung vom
  polyharmonischen {T}yp in kritischer {D}imension,
\newblock \url{https://doi.org/10.17185/duepublico/70153}.

\bibitem{LongGas}
Frédéric~Louis de~Longueville and Andreas Gastel.
\newblock Conservation laws for even order systems of polyharmonic map type,
\newblock arXiv:1909:05697.

\bibitem{GastelScheven09}
Andreas Gastel and Christoph Scheven.
\newblock Regularity of polyharmonic maps in the critical dimension.
\newblock {\em Comm. Anal. Geom.}, 17(2):185--226, 2009.

\bibitem{GiaMarti}
Mariano Giaquinta and Luca Martinazzi.
\newblock {\em An introduction to the regularity theory for elliptic systems,
  harmonic maps and minimal graphs}, volume~11 of {\em Appunti. Scuola Normale
  Superiore di Pisa (Nuova Serie) [Lecture Notes. Scuola Normale Superiore di
  Pisa (New Series)]}.
\newblock Edizioni della Normale, Pisa, second edition, 2012.

\bibitem{Grafakos}
Loukas Grafakos.
\newblock {\em Classical {F}ourier analysis}, volume 249 of {\em Graduate Texts
  in Mathematics}.
\newblock Springer, New York, second edition, 2008.

\bibitem{Guo2020}
Chang-Yu Guo and Chang-Lin Xiang.
\newblock Regularity of weak solutions to higher order elliptic systems in
  critical dimensions, 
  \newblock arXiv:2010.09149.

\bibitem{Helein91}
Fr\'{e}d\'{e}ric H\'{e}lein.
\newblock R\'{e}gularit\'{e} des applications faiblement harmoniques entre une
  surface et une vari\'{e}t\'{e} riemannienne.
\newblock {\em C. R. Acad. Sci. Paris S\'{e}r. I Math.}, 312(8):591--596, 1991.

\bibitem{Helein}
Fr\'{e}d\'{e}ric H\'{e}lein.
\newblock {\em Harmonic maps, conservation laws and moving frames}, volume 150
  of {\em Cambridge Tracts in Mathematics}.
\newblock Cambridge University Press, Cambridge, second edition, 2002.
\newblock Translated from the 1996 French original, With a foreword by James
  Eells.

\bibitem{Hunt}
Richard~A. Hunt.
\newblock On {$L(p,\,q)$} spaces.
\newblock {\em Enseign. Math. (2)}, 12:249--276, 1966.

\bibitem{Jost}
J\"{u}rgen Jost.
\newblock {\em Riemannian geometry and geometric analysis}.
\newblock Universitext. Springer, Cham, seventh edition, 2017.

\bibitem{LammRiv}
Tobias Lamm and Tristan Rivi\`ere.
\newblock Conservation laws for fourth order systems in four dimensions.
\newblock {\em Comm. Partial Differential Equations}, 33(1-3):245--262, 2008.

\bibitem{Riviere07}
Tristan Rivi\`ere.
\newblock Conservation laws for conformally invariant variational problems.
\newblock {\em Invent. Math.}, 168(1):1--22, 2007.

\bibitem{RivLecture}
Tristan Rivi\`ere.
\newblock Conformally invariant variational problems,
\newblock arXiv:1206.2116.

\bibitem{Schikorra10}
Armin Schikorra.
\newblock A remark on gauge transformations and the moving frame method.
\newblock {\em Ann. Inst. H. Poincar\'{e} Anal. Non Lin\'{e}aire},
  27(2):503--515, 2010.

\bibitem{Struwe08}
Michael Struwe.
\newblock Partial regularity for biharmonic maps, revisited.
\newblock {\em Calc. Var. Partial Differential Equations}, 33(2):249--262,
  2008.

\bibitem{Tartar}
Luc Tartar.
\newblock Imbedding theorems of {S}obolev spaces into {L}orentz spaces.
\newblock {\em Boll. Unione Mat. Ital. Sez. B Artic. Ric. Mat. (8)},
  1(3):479--500, 1998.

\bibitem{Uhlenbeck82}
Karen~K. Uhlenbeck.
\newblock Connections with {$L^{p}$} bounds on curvature.
\newblock {\em Comm. Math. Phys.}, 83(1):31--42, 1982.

\bibitem{Wang04}
Changyou Wang.
\newblock Biharmonic maps from {$\bold R^4$} into a {R}iemannian manifold.
\newblock {\em Math. Z.}, 247(1):65--87, 2004.

\bibitem{Wente69}
Henry~C. Wente.
\newblock An existence theorem for surfaces of constant mean curvature.
\newblock {\em J. Math. Anal. Appl.}, 26:318--344, 1969.

\bibitem{Ziemer}
William~P. Ziemer.
\newblock {\em Weakly differentiable functions}, volume 120 of {\em Graduate
  Texts in Mathematics}.
\newblock Springer-Verlag, New York, 1989.
\newblock Sobolev spaces and functions of bounded variation.

\end{thebibliography}
\end{document}